\numberwithin{equation}{section}
\newtheorem{thm}{Theorem}[section]
\newtheorem{prop}{Proposition}[section]
\newtheorem{lemma}{Lemma}[section]
\newtheorem{remark}{Remark}[section]
\newcommand{\R}{\mathbb{R}}
\newcommand{\p}{\partial}
\renewcommand{\S}{\mathbb{S}}
\newcommand{\LV}{\left|}
\newcommand{\RV}{\right|}
\newcommand{\LC}{\left(}
\newcommand{\RC}{\right)}
\definecolor{mycolor}{rgb}{0.122, 0.435, 0.698}
\definecolor{aliceblue}{rgb}{0.94, 0.97, 1.0}
\newmdenv[innerlinewidth=0.5pt, roundcorner=4pt,linecolor=mycolor,innerleftmargin=6pt,
innerrightmargin=6pt,innertopmargin=6pt,innerbottommargin=6pt]{mybox}
\newmdenv[backgroundcolor=aliceblue,innerlinewidth=0.5pt, roundcorner=4pt,linecolor=mycolor,innerleftmargin=6pt,
innerrightmargin=6pt,innertopmargin=6pt,innerbottommargin=6pt]{mybox1}
\begin{document}

\title[]{Stable determination of time-dependent collision kernel in the nonlinear Boltzmann equation}

\author[Lai]{Ru-Yu Lai}
\address
{R.-Y. Lai, School of Mathematics\\
	University of Minnesota\\ 
	Minneapolis, MN 55455, USA }

\email{rylai@umn.edu}

\author[Yan]{Lili Yan}

\address
{L. Yan, School of Mathematics\\
	University of Minnesota\\ 
	Minneapolis, MN 55455, USA }

\email{lyan@umn.edu}

\maketitle

\begin{abstract}
We consider an inverse problem for the nonlinear Boltzmann equation with a time-dependent kernel in dimensions $n\ge 2$. We establish a logarithm-type stability result for the collision kernel from measurements under certain additional conditions. A uniqueness result is derived as an immediate consequence of the stability result. 
Our approach relies on second-order linearization, multivariate finite differences, as well as the stability of the light-ray transform.
\end{abstract}
 
\section{Introduction}
This article studies an inverse problem for the Boltzmann equation, which describes the evolution of a dilute gas with binary collisions between particles. 
Let $\Omega\subset \R^n$, $n\ge 2$, be an open bounded and convex domain with smooth boundary $\p \Omega$. We denote 
$$
    U := \Omega\times\R^n,\quad U_T:=(0,T)\times U,\quad \Gamma^T_\pm:=(0,T)\times\Gamma_\pm,
$$
where $T>0$. The incoming ($\Gamma_{-}$) and outgoing ($\Gamma_{+}$) sets are defined as follows:
\begin{equation}
\Gamma_{\pm} := \{(x, v) \in \p\Omega\times \R^n: \pm v\cdot n(x) >0 \} 
\end{equation}
with the unit outer normal vector $n(x)$ at $x\in \p\Omega$. Suppose that $F\equiv F(t,x,v)$ is the distribution function for the particles at time $t\geq 0$ and position $x \in \Omega$ with velocity $v\in \R^n$. It satisfies the following initial boundary value problem for the Boltzmann equation:
\begin{equation}\label{eq_1_IBVP}
\left\{\begin{array}{rlll}
\p_t F + v\cdot \nabla_x F &=& Q(F,F)& \hbox{ in } U_T,\\
F &=& g&  \hbox{ on } \Gamma^T_- ,\\
F &=& h& \hbox{ on }\{0\}\times U.\\
\end{array}\right.
\end{equation}
The collision operator $Q$ describes particles' binary interactions and takes the form
\begin{equation}\label{DEF:Q}
Q(F_1,F_2)(t,x,v) =  \int_{\R^n}\int_{\S^{n-1}}{K(t,x,v,u,w)}[F_1(t,x,u')F_2(t,x,v')-F_1(t,x,u)F_2(t,x,v)]\,d\omega du,
\end{equation}
where $u$ and $v$ are velocities before a collision of particles with post-collision velocities  
\begin{equation}\label{incoming and outgoing velocities}
u' = u-[(u-v)\cdot \omega]\omega\quad\hbox{ and }\quad v' = v+[(u-v)\cdot \omega]\omega
\end{equation}
at an angle $\omega\in \mathbb{S}^{n-1}$. It is clear that they satisfy $u'\cdot \omega=v\cdot \omega$ and $v'\cdot \omega=u\cdot \omega$. Here the function $K$ is the collision kernel.

We say $K$ is in the \textit{admissible set} $\mathcal{M}$ if 
    $K\in L^{\infty}(U_T;L^1(\R^n\times\mathbb{S}^{n-1}))$
    and satisfies 
\begin{equation}\label{assumption} 
\begin{aligned}
    \|K\|_{L^{\infty}(U_T;L^1(\R^n\times\mathbb{S}^{n-1}))}:=\left\| \int_{\R^n}\int_{\mathbb{S}^{n-1}} |K(t,x,v,u,\omega)|\,d\omega du\right\|_{L^\infty(U_T)} <M 
\end{aligned}
\end{equation} 
for some constant $M>0$.
As shown in Theorem \ref{thm_wellposed}, these conditions guarantee that the problem is well-posed with small data $(g,h)$. Specifically, there exist $\kappa>0$ and a constant $C>0$ such that when 
\begin{equation}
(g,h)\in \mathcal{X}_\kappa 
:= \{(g,h)\in L^\infty(\Gamma_-^T)\times L^\infty(U): \|g\|_{L^\infty(\Gamma^T_-)} + \|h\|_{L^\infty(U)} \le \kappa\},
\end{equation}
the initial boundary value problem \eqref{eq_1_IBVP} has a unique solution $F \in L^\infty(U_T)$ satisfying $\|F\|_{L^\infty(U_T)}\le C\kappa$.
Hence we can now define the measurement operator $\mathcal{A}_K:\mathcal{X}_\kappa \rightarrow L^\infty(\Gamma^T_+)\times L^\infty(U)$, mapping from the incoming and initial data to the outgoing and final data, as follows:
\begin{equation}\label{DEF:A}
    \mathcal{A}_K:=(\mathcal{A}_K^{out},\mathcal{A}^{T}_K): (g,h)\mapsto  (F|_{\Gamma^T_+},F(T,\cdot,\cdot) )
\end{equation}
with the norm 
$$
    \|\mathcal{A}_{K}(g,h) \|_{L^\infty(\Gamma^T_+)\times L^\infty(U)} := \| \mathcal{A}^{out}_{K}(g,h)\|_{L^\infty(\Gamma^T_+)}+\| \mathcal{A}^T_{K}(g,h)\|_{L^\infty(U)}.
$$

The inverse problem here is to recover properties of the kernel $K$ from $\mathcal{A}_K$, which collected data from the surface of the domain, initial data, and final data. To understand the underlying essential details of entangled particle interactions, described by the collision operators, has been attractive not only because of theoretical interests but also its potential applications \cite{LL2000}. 

In recent years, relevant studies on identifying coefficients in the Boltzmann equation and transport equation have made substantial progress in both analytical and numerical aspects. They are motivated by a broad domain of fields, such as medical imaging \cite{Arridge1999}, astronomy \cite{WandUeno}, and remote sensing \cite{Liou2002, MarshakDavis}, see also the review articles \cite{bal_inverse_2009, Kuireview, Stefanov2003}. 
In particular, analytic methods for the determination of both absorption and scattering coefficients in the radiative transfer equation (RTE), known as the linear Boltzmann equation, are well-established. 
One crucial strategy, established in \cite{CS1, CS3} for the uniqueness result, is to analyze the singular decomposition of the kernel of the Albedo operator, which maps from incoming data to outgoing data. As a result, the absorption and scattering coefficients can be reconstructed separately because of their appearance in the components with different degrees of singularities.  
The demonstration of the method was discussed in \cite{CS1, CS2, CS3, CS98, MCDOWALL04, SU2d} for the uniqueness result and in \cite{Bal14, Bal10, Bal18, lai_inverse_2019, Wang1999, ZhaoZ18} for the corresponding stability estimate.
The application of Carleman estimates is another basic strategy to recover unknown coefficients by using suitable Carleman weight in an $L^2$ weighted estimate for a solution to a transport equation with large parameters \cite{Gaitan14,  Yamamoto2016, KlibanovP2006, Klibanov08, Lai-L, LaiUhlmannZhou22, Machida14}. The highlight of this approach is that a single measurement is sufficient to recover the linear coefficient in many cases.
And lastly, in dealing with time-dependent coefficients in the linear Boltzmann equation, \cite{bellassouedInverseProblemLinear2019} gave the uniqueness and logarithmic stability results for the time-dependent absorption coefficient and \cite{BELLASSOUED19} considered the uniqueness for the time-dependent scattering coefficient.

The literature on inverse problems for nonlinear Boltzmann equations is relatively sparse compared to the linear ones. One key factor that contributes to recovery difficulty is the special feature of the collision kernel in \eqref{DEF:Q}, which highly depends on the velocities before and after a collision. Recently, the higher-order linearization technique has been utilized to deal with inverse problems for various types of nonlinear PDEs. See for instance, \cite{KLU2018, LUW2018, UW2020} for nonlinear hyperbolic equations and \cite{AZ2018, AZ2020, feizmohammadi_inverse_2020,krupchyk_partial_2020,krupchyk_remark_2020,lai_partial_2020,lassas_partial_2020, LLLS201903} for nonlinear elliptic equations. 
The technique was generalized to the stationary nonlinear Boltzmann equation in \cite{laiReconstructionCollisionKernel2021}, in which the collision kernel was uniquely recovered under a monotonicity condition and a reconstruction formula was given. 
For the dynamic nonlinear Boltzmann equation, the uniqueness result was achieved in \cite{li_determining_2022} by performing linearization near the equilibrium. See also \cite{balehowsky_inverse_2022} for the nonlinear Boltzmann equation on Lorentzian space-time with unknown metric, where the unknown metric was determined up to an isometry from a source-to-solution map. 
Additionally, the RTE with nonlinear terms was investigated in \cite{LaiRenZhou2022,LaiUhlmannZhou22}.

\subsection{Main result}
The goal of the present work is to give uniqueness and stability results for the nonlinear Boltzmann equation with a time-dependent collision kernel with given $\mathcal{A}_K$. 
Specifically, we will study the case that $K$ has a product structure, i.e., 
$$
    K(t,x,v,u,\omega) = \Phi(t,x,|v|)\Psi(v,u,\omega).
$$
Here the unknown function $\Phi$ depending on $(t,x,v)\in \R\times\R^n\times\R^n$ is a radial function in $v$, which depends only on the speed, not the direction. Let us denote $r = |v|>0$ and write $\Phi = \Phi(t,x,r)$. The given nonnegative function $\Psi$ satisfies
\begin{align}\label{CON:Psi}
    \Psi(v,\cdot,\cdot)\geq c_0>0 \quad \hbox{ in }B_3(v)\times \mathbb{S}^{n-1}
\end{align}
for some constant $c_0>0$ independent of $v$. Note that this lower bound condition will be crucial to control the light ray transform of $\Phi$, see Section~\ref{sec:reconstruction of kernel} for details. The notation $B_\alpha(x)$ is used to represent an open ball of radius $\alpha>0$ and center $x$ in both $\R^n$ and $\R^{n+1}$ if no ambiguities arise. When $x=0$, we simply denote $B_\alpha\equiv B_\alpha(0)$.

\begin{thm}\label{thm:stability}
    Let $\Omega\subseteq\R^n$, $n\ge 2$, be an open bounded and convex domain with smooth boundary $\p \Omega$. Let $\Psi\in C^{1}(\R^n\times\R^n\times\mathbb{S}^{n-1})$ be a nonnegative function satisfying \eqref{CON:Psi}, and let $\Phi_\ell \in C^{1}((0,T)\times\Omega\times(0,\infty))$ for $\ell=1,\,2$.
    Suppose for fixed $r = |v|>0$, we have
$$
    \|\Phi_\ell(\cdot,\cdot,r)\|_{C((0,T)\times\Omega)}\leq M_1\quad \hbox{ and }\quad\text{supp}(\Phi_1-\Phi_2)(\cdot,\cdot,r)\subset (0,T)\times\Omega
$$ 
for a fixed constant $M_1>0$.

Let $\mathcal{A}_{K_\ell}$ be the measurement operator of the problem \eqref{eq_1_IBVP} with the collision kernel 
    $$ 
        K_\ell(t,x,v,u,\omega) = \Phi_\ell(t,x,r)\Psi(v,u,\omega),  
    $$
    satisfying $K_\ell,\, \nabla_x K_\ell,\,\nabla_v K_\ell\in \mathcal{M}$. Suppose that there exist constants $\kappa>0$ and $0<\delta<1$ 
    so that   
\begin{equation}\label{eq_stability_assumption}
    \|(\mathcal{A}_{K_1}- \mathcal{A}_{K_2})(g,h)\|_{L^\infty(\Gamma^T_+)\times L^\infty(U)}
\le \delta\quad \hbox{for all $(g,h)\in \mathcal{X}_\kappa$}.
\end{equation}
Then 
there exists $\mu\in(0,1)$ depending on $n,\,T$, and $\Omega$ such that 
\[
\|(\Phi_1-\Phi_2)(\cdot,\cdot,r)\|_{H^{-1}((0,T)\times\Omega)}
\le 
C \LC \delta^{\mu\over 2(n+3)} + |\log\delta|^{-1}\RC.
\]
Here $C>0$ depends only on r, $n$, $\kappa$,  $c_0$, $\Omega$, $T$, $M$, and $M_1$.

Moreover, let $s>[\frac{n+1}{2}]+1$. Suppose that 
$\Phi_\ell\in  H^s((0,T)\times \Omega\times\R)$
for $\ell=1,2$, satisfies
$$
\|\Phi_\ell(\cdot,\cdot,r)\|_{H^s((0,T)\times \Omega)}\le M_2\quad \hbox{ and }\quad\text{supp}(\Phi_1-\Phi_2)(\cdot,\cdot,r)\subset (0,T)\times\Omega
$$
for some fixed constant $M_2>0$.
Then there exists $\theta\in (0,1)$ depending on $s$ such that 
\[
\|(\Phi_1-\Phi_2)(\cdot,\cdot,r)\|_{L^\infty((0,T)\times\Omega)}
\le 
C \LC \delta^{\mu\over 2(n+3)} + |\log\delta|^{-1}\RC^\theta,
\]
where $C>0$ depends on $r$, $n$, $\kappa$,  $c_0$, $s$, $\Omega$, $T$, $M$, and $M_j$ for $j=1, 2$.
\end{thm}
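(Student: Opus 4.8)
The strategy is to run a second-order linearization of $\mathcal{A}_{K_1}-\mathcal{A}_{K_2}$, choose concentrating solutions of the free transport equation so that the collision structure collapses onto a single space-time line, and thereby reduce matters to a conditionally stable inversion of the light-ray transform of $\Phi_1-\Phi_2$.

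First I would record an integral identity. By Theorem~\ref{thm_wellposed} the map $(g,h)\mapsto F$ is well-defined on $\mathcal{X}_\kappa$ and depends analytically on small data, so the solution operator is $C^2$ near the origin. Plugging in data $(\varepsilon_1 g_1+\varepsilon_2 g_2,\varepsilon_1 h_1+\varepsilon_2 h_2)$: since $Q$ is bilinear with no linear part at $F=0$, the first-order profiles $F_1,F_2$ solve the free transport equation $\p_t F_j+v\cdot\nabla_x F_j=0$ with data $(g_j,h_j)$, and $w^{(\ell)}:=\p_{\varepsilon_1}\p_{\varepsilon_2}|_0 F^{(\ell)}$ (with $F^{(\ell)}$ the solution with kernel $K_\ell$) solves $\p_t w^{(\ell)}+v\cdot\nabla_x w^{(\ell)}=Q_{K_\ell}(F_1,F_2)+Q_{K_\ell}(F_2,F_1)$ with zero data. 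Pairing the equation for $w^{(1)}-w^{(2)}$ with a free-transport test function $\phi$, compactly supported in $(x,v)$, and integrating by parts over $U_T$, the incoming and initial boundary terms drop and the surviving final-time and outgoing terms are controlled by the corresponding second finite differences of $\mathcal{A}_{K_1}-\mathcal{A}_{K_2}$, each $\le\delta$. Using $K_1-K_2=(\Phi_1-\Phi_2)(t,x,|v|)\Psi$ and then setting $\varepsilon_1=\varepsilon_2=\varepsilon$, this gives
\[
\left|\int_{U_T}(\Phi_1-\Phi_2)(t,x,|v|)\,\mathcal{B}[F_1,F_2](t,x,v)\,\phi(t,x,v)\,dt\,dx\,dv\right|\le C\big(\delta\varepsilon^{-2}+\varepsilon\big)\big(\|\phi\|_{L^1(\Gamma^T_+)}+\|\phi(T)\|_{L^1(U)}\big),
\]
where $\mathcal{B}[F_1,F_2]=\int_{\R^n}\int_{\S^{n-1}}\Psi(v,u,\omega)[F_1(u')F_2(v')+F_2(u')F_1(v')-F_1(u)F_2(v)-F_2(u)F_1(v)]\,d\omega\,du$ and the $\varepsilon$-term is the Taylor remainder of the analytic solution operator, bounded using only $L^\infty$ norms of the data. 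Choosing $\varepsilon\sim\delta^{1/3}$ turns the factor $C(\delta\varepsilon^{-2}+\varepsilon)$ into $C\delta^{1/3}$.

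The crux, and the main obstacle, is to choose $F_1,F_2,\phi$ so that $\mathcal{B}[F_1,F_2]\phi$ is, as a function of $(t,x,v)$, a sign-definite approximate measure supported near one characteristic of speed $r$. Fix $v_2$ with $|v_2|=r$, a base point $x_0$, and $v_1\neq v_2$ with $B_1(v_1)\subset B_3(v_2)$. I would take $F_1=\chi_1(v)\eta(x-tv)$ with $0\le\chi_1$ supported in $B_1(v_1)$ and $\eta\equiv1$ on the bounded region swept by the relevant characteristics through $\Omega$, so that on $U_T$ the loss part of $\mathcal{B}[F_1,F_2]$ equals $-\big(\int_{\R^n}\int_{\S^{n-1}}\Psi(v,u,\omega)\chi_1(u)\,d\omega\,du\big)F_2(t,x,v)$, whose coefficient is bounded below at $v=v_2$ by \eqref{CON:Psi}. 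Taking $F_2=\chi_2(v)\beta(x-tv)$ and $\phi=\chi_3(v)\gamma(x-tv)$ — all exact (adjoint) free-transport solutions — with $\chi_2,\chi_3$ supported in $B_{\rho_v}(v_2)$ and $\beta,\gamma$ bumps of width $\rho_x$ about $x_0$, the product $F_2\phi$ concentrates near $v=v_2$ and near the line $t\mapsto(t,x_0+tv_2)$. Because $v_1\neq v_2$, in the gain part of $\mathcal{B}$ the requirements $u'\in B_{\rho_v}(v_1)$, $v'\in B_{\rho_v}(v_2)$ together with $v\in B_{\rho_v}(v_2)$ force $\omega$ into an $O(\rho_v)$-neighborhood of a hyperplane, so after the $d\omega$-integration the gain contribution is smaller by a factor $O(\rho_v)$; and $\|\phi\|_{L^1(\Gamma^T_+)}+\|\phi(T)\|_{L^1(U)}$ is of the same order, $\sim\rho_v^n\rho_x^n$, as the amplitude in front of the loss term, so these cancel. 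After also bounding the error from replacing $(\Phi_1-\Phi_2)(t,x,|v|)$ by $(\Phi_1-\Phi_2)(t,x,r)$ and passing from the $\rho_x$-mollified to the genuine line integral (both via the $C^1$-regularity of $\Phi_\ell$, i.e.\ Lipschitz control on the relevant compact set), the displayed inequality collapses, after optimizing the accumulated amplitude and error factors over $\varepsilon,\rho_v,\rho_x$, to a Hölder bound $\|L(\Phi_1-\Phi_2)(\cdot,\cdot,r)\|_{L^\infty}\le C\delta^{1/(2(n+3))}$, uniformly over light rays of speed $r$ that meet $(0,T)\times\Omega$ (off of which $\Phi_1-\Phi_2$ vanishes).

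It remains to invert the light-ray transform and interpolate. For fixed speed $r$, $L$ is the restricted X-ray transform in $\R^{n+1}$ over the cone of directions $\{(1,v):|v|=r\}$, whose normal operator is elliptic only on the complementary (spacelike) frequency cone; invoking the stability of the light-ray transform, the spacelike-frequency content of $\Phi_1-\Phi_2$ is controlled at a Hölder rate $\delta^{\mu/(2(n+3))}$ with $\mu=\mu(n,T,\Omega)\in(0,1)$, while the remaining timelike-frequency content is handled by the a priori information: $\Phi_1-\Phi_2$ is compactly supported in $(0,T)\times\Omega$ with $\|\Phi_\ell(\cdot,\cdot,r)\|_{C^0}\le M_1$, hence $\|\Phi_1-\Phi_2\|_{L^2}\le C$, so $\int_{|\xi|>R}\langle\xi\rangle^{-2}|\widehat{\Phi_1-\Phi_2}(\xi)|^2\le CR^{-2}$; choosing $R\sim|\log\delta|$ and combining yields $\|(\Phi_1-\Phi_2)(\cdot,\cdot,r)\|_{H^{-1}((0,T)\times\Omega)}\le C(\delta^{\mu/(2(n+3))}+|\log\delta|^{-1})$, the first assertion. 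For the $L^\infty$ bound, extend $\Phi_1-\Phi_2$ by zero to $\R^{n+1}$ (legitimate by the support hypothesis) and pick $\sigma$ with $(n+1)/2<\sigma<s$, possible since $s>[\frac{n+1}{2}]+1$; then Sobolev interpolation and the embedding $H^\sigma(\R^{n+1})\hookrightarrow L^\infty$, together with $\|\Phi_\ell(\cdot,\cdot,r)\|_{H^s}\le M_2$, give $\|(\Phi_1-\Phi_2)(\cdot,\cdot,r)\|_{L^\infty}\le C\|\Phi_1-\Phi_2\|_{H^{-1}}^{1-\tau}\le C(\delta^{\mu/(2(n+3))}+|\log\delta|^{-1})^{\theta}$ with $\tau=(\sigma+1)/(s+1)$ and $\theta=1-\tau\in(0,1)$ depending only on $s$ once $\sigma$ is fixed in terms of $n$. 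The first and last steps are by now standard (higher-order linearization; interpolation), the third is a known conditional-stability property of the light-ray transform; the real work is the bilinear construction of the second paragraph, which forces the collision operator — with its pre/post-collision velocities and its $\omega$- and $u$-integrations — to reproduce a single line integral of $\Phi$ while all data stay $L^\infty$-bounded by $\kappa$.
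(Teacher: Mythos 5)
Your overall scheme—second-order linearization, concentration to extract a light-ray transform, then Fourier analysis and interpolation—matches the paper in spirit, and your bilinear construction (concentrating $F_1,F_2,\phi$ all in velocity around two distinct speeds and killing the gain term by geometric confinement of $\omega$) is a genuinely different and more delicate route than the paper's cleaner choice ($V_1 = e^{-|v-v_*|^2}$, $V_2\equiv 1$, with all concentration placed in the test function $\psi_\lambda=\varphi_\lambda(x-tv,v)$). If the bookkeeping of amplitudes and the smallness of the gain term can be made rigorous, this could work, though it requires more care than you indicate.

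There is, however, a genuine gap in the last step. Controlling $L_r(\Phi_1-\Phi_2)(\cdot,\cdot,r)$ for rays of a single fixed speed $r$ only controls $\widehat{\Phi_1-\Phi_2}(\tau,\xi,r)$ on the spacelike cone $A_r=\{|\tau|\le r|\xi|\}$ (at rate $\delta^{1/(n+3)}$—no $\mu$ appears yet). The $H^{-1}$ norm, split into $|z|\le R$ and $|z|>R$, has three contributions: high frequencies (which you control by $\|\Phi_1-\Phi_2\|_{L^2}\le C$, fine), low spacelike frequencies (controlled by the light-ray estimate, fine), and low \emph{timelike} frequencies, i.e. $(\tau,\xi)$ with $|\tau|>r|\xi|$ and $|(\tau,\xi)|\le R$, which you do not control at all. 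Your sentence ``the remaining timelike-frequency content is handled by the a priori information'' is not backed up: the $L^2$ bound only kills high frequencies. The missing ingredient is the quantitative analytic-continuation step (Vessella-type estimate, Proposition~\ref{prop:extension Ve} in the paper): since $\widetilde\Phi(\cdot,\cdot,r)$ is compactly supported, $\widehat{\widetilde\Phi}$ extends to an entire function of exponential type, and the Hölder-type propagation of smallness for analytic functions transfers the spacelike-cone estimate into a bound $|\widehat{\widetilde\Phi}(\tau,\xi,r)|\le Ce^{(1-\mu)\alpha}\delta^{\mu/(n+3)}$ on the full ball $B_\alpha$. This is precisely where the exponent $\mu\in(0,1)$ enters; writing $\delta^{\mu/(2(n+3))}$ for the spacelike content alone (as you do) is unjustified. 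Without this step the proof of the $H^{-1}$ bound—and hence the interpolated $L^\infty$ bound—does not close.
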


The proof of Theorem~\ref{thm:stability} is given in Section~\ref{sec:reconstruction of kernel}. In order to fully recover the unknown $\Phi$, we rely on boundary measurements, initial data, and final data.  

As an immediate consequence of Theorem \ref{thm:stability}, we have the following uniqueness result.
\begin{thm}\label{thm:unique} 
Under the same hypotheses in Theorem \ref{thm:stability}, instead of \eqref{eq_stability_assumption}, suppose that we have
\[
\mathcal{A}_{K_1}(g,h) = \mathcal{A}_{K_2}(g,h) \quad \hbox{for all $(g,h)\in \mathcal{X}_\kappa$.}
\]
Then $\Phi_1 =\Phi_2 $ in $(0,T)\times\Omega\times (0,\infty)$. 
\end{thm}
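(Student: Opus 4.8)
The plan is to reduce the uniqueness statement directly to Theorem~\ref{thm:stability}. Fix $r=|v|>0$. Observe that the hypothesis $\mathcal{A}_{K_1}(g,h) = \mathcal{A}_{K_2}(g,h)$ for all $(g,h)\in\mathcal{X}_\kappa$ is precisely the statement that the estimate \eqref{eq_stability_assumption} holds for \emph{every} $\delta\in(0,1)$: the left-hand side of \eqref{eq_stability_assumption} is identically zero, hence bounded by any positive $\delta$. The other standing hypotheses of Theorem~\ref{thm:stability} (convexity and smoothness of $\Omega$, the regularity $\Psi\in C^1$, the lower bound \eqref{CON:Psi}, $\Phi_\ell\in C^1$ with the uniform bound $M_1$ and the support condition, and $K_\ell,\nabla_xK_\ell,\nabla_vK_\ell\in\mathcal{M}$) are assumed verbatim in Theorem~\ref{thm:unique}, so the first conclusion of Theorem~\ref{thm:stability} applies with this arbitrary $\delta$.

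Next I would pass to the limit $\delta\to 0^+$. From Theorem~\ref{thm:stability} we get, for each $\delta\in(0,1)$,
\[
\|(\Phi_1-\Phi_2)(\cdot,\cdot,r)\|_{H^{-1}((0,T)\times\Omega)}
\le C\LC \delta^{\mu\over 2(n+3)} + |\log\delta|^{-1}\RC,
\]
where the constant $C$ depends on $r,n,\kappa,c_0,\Omega,T,M,M_1$ but crucially \emph{not} on $\delta$, and $\mu\in(0,1)$ depends only on $n,T,\Omega$. Since the left-hand side is independent of $\delta$ while the right-hand side tends to $0$ as $\delta\to 0^+$ (both $\delta^{\mu/(2(n+3))}\to 0$ and $|\log\delta|^{-1}\to 0$), we conclude $\|(\Phi_1-\Phi_2)(\cdot,\cdot,r)\|_{H^{-1}((0,T)\times\Omega)}=0$, hence $\Phi_1(\cdot,\cdot,r)=\Phi_2(\cdot,\cdot,r)$ as distributions, and by the assumed $C^1$ regularity, pointwise, on $(0,T)\times\Omega$. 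Together with the support condition $\text{supp}(\Phi_1-\Phi_2)(\cdot,\cdot,r)\subset(0,T)\times\Omega$, this gives $\Phi_1(\cdot,\cdot,r)=\Phi_2(\cdot,\cdot,r)$ on all of $(0,T)\times\Omega$.

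Finally, since $r=|v|>0$ was arbitrary, we obtain $\Phi_1=\Phi_2$ on $(0,T)\times\Omega\times(0,\infty)$, which is the desired conclusion. There is essentially no obstacle here: the entire content is in Theorem~\ref{thm:stability}, and the only point to be careful about is that the constant $C$ and the exponent $\mu$ in that theorem are uniform in $\delta$, which is exactly how the statement is phrased, so the limiting argument is legitimate. One could alternatively invoke the second ($L^\infty$) conclusion of Theorem~\ref{thm:stability} under the additional $H^s$ hypothesis, but the $H^{-1}$ bound already suffices for uniqueness.
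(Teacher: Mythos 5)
Your proof is correct and follows exactly the route the paper intends when it calls Theorem~\ref{thm:unique} an ``immediate consequence'' of Theorem~\ref{thm:stability}: the identity $\mathcal{A}_{K_1}=\mathcal{A}_{K_2}$ on $\mathcal{X}_\kappa$ makes \eqref{eq_stability_assumption} hold for every $\delta\in(0,1)$, the constants $C$ and $\mu$ in the stability bound are independent of $\delta$, so letting $\delta\to 0^+$ forces $\|(\Phi_1-\Phi_2)(\cdot,\cdot,r)\|_{H^{-1}}=0$ for each $r>0$, and the $C^1$ regularity upgrades this to pointwise equality. The paper does not spell these details out, but your argument is precisely the intended one.
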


All the aforementioned results for the Boltzmann equations and transport equations, except \cite{bellassouedInverseProblemLinear2019, BELLASSOUED19}, are concerned with time-independent coefficients. To the best of our knowledge, the developed methodologies, including singular decomposition method and the Carleman estimate, have been applied only for this setting. In particular, when applying the Carleman estimate, the unknown coefficients are encoded in the initial data of the solution so that it is not applicable to deal with the time-dependent coefficients, see for instance \cite{Yamamoto2016, Machida14} for the detailed discussions of the method.

In the present work, inspired by \cite{bellassouedInverseProblemLinear2019}, we rely on several techniques to determine the time-dependent kernel in \eqref{eq_1_IBVP}. To begin with, we perform the higher-order linearization method by applying the multivariate finite differences method, introduced in \cite{lassasUniquenessReconstructionStability2022} for semi-linear wave equations. Through this, one can decompose the solution of the nonlinear Boltzmann equation into three components, including the solution to the linear equation, the solution to the nonhomogeneous linear equation, and the remainder higher-order term. Due to
quadratic-like nonlinearity in the collision operator, it is sufficient to introduce two small parameters in the data in this linearization process. Next, by suitably choosing linear solutions, for each fixed $v$, the scaled light-ray transform of $\Phi(t,x,|v|)$, defined by
\[
L_r \Phi(x,v,r): = \int_{\R}\ \Phi(s,x+sv,r)ds \quad \hbox{ for } (x,v)\in \R^n\times r\mathbb{S}^{n-1},
\]
can be extracted from the integral identity. This further leads to the stability estimate of the partial Fourier transform of $\Phi$ (denoted by $\widehat{\Phi}(\tau,\xi,|v|)$)
with respect to $t$ and $x$ variables in the spacelike cone. More general results on the injectivity, and support theorems of the light-ray transform can be found in \cite{StefanovScattering89,StefanovSupportTheorems}.
The main result in Theorem~\ref{thm:stability} then follows by generalizing the stability to the timelike cone by noting the analyticity of $\widehat\Phi$ if $\Phi(\cdot,\cdot,|v|)$ is compactly supported, see Section~\ref{sec:reconstruction of kernel}. 
The uniqueness result in Theorem~\ref{thm:unique} can be viewed as a complement of the earlier work \cite{laiReconstructionCollisionKernel2021}.

The structure of the paper is as follows. In Section 2, we introduce preliminary results for a linear transport equation and establish the well-posedness of the Boltzmann equation \eqref{eq_1_IBVP} for small data. We derive an integral identity in Section 3 that bridges the unknown coefficient and the given data by using multivariate finite differences. Section 4 is devoted to the proof of Theorem \ref{thm:stability} and Theorem \ref{thm:unique}. 

\section{Preliminaries}
In this section, we introduce preliminary results for a linear transport equation and establish the well-posedness of the Boltzmann equation \eqref{eq_1_IBVP} for small data.

\subsection{Notations}

We define the forward exit time $\tau_+(x,v)$ and the backward exit time $\tau_-(x,v)$ for every $(x,v)\in \overline\Omega\times \R^n$ by
\[
    \tau_{\pm}(x,v) := \sup\{s\ge 0: x\pm sv \in \Omega\} .
\]
In other words, $\tau_{+}(x,v)$ and $\tau_{-}(x,v)$ are the time at which a particle $x$ leaves the domain $\Omega$ with velocity $v$ and velocity $-v$, respectively.

We define the space $L^p(\Gamma^T_\pm;d\xi)$, $1\leq p<\infty$, with the norm
$$
   \|f\|_{L^p(\Gamma^T_\pm;d\xi)}= \LC\int^T_0\int_{\Gamma_\pm} |f|^p\,d\xi dt\RC^{1/p},
$$
where $d\xi=|n(x)\cdot v|d\sigma_x dv$ is the measure on $\Gamma_\pm$ with the measure $d\sigma_x$ on $\p\Omega$.
For $p=\infty$, we denote $L^\infty(U)$, $L^\infty(U_T)$, and $L^\infty(\Gamma_\pm^T)$ to be the standard spaces consisting of all functions that are essentially bounded.

\subsection{Forward problems}
We first study the existence and stability of solutions to the initial boundary value problem for a linear transport equation. The following lemma can be found in \cite[Proposition 4, Chapter 21]{dautrayMathematicalAnalysisNumerical2000}, in which more discussions on the well-posed problems for transport equations are addressed. In the general geometry setting, we refer to \cite{LaiUhlmannZhou22} for details.
\begin{lemma}\label{lemma_solution}
Suppose that $f\in L^\infty(U_T)$, $g\in L^\infty(\Gamma^T_-)$ and $h\in L^\infty(U)$. Then the problem  
\begin{equation}\label{linear equation}
\left\{\begin{array}{rlll}
\p_t F + v\cdot \nabla_x F &=& f& \hbox{ in } U_T,\\
F &=& g&  \hbox{ on } \Gamma^T_- ,\\
F &=& h& \hbox{ on }\{0\}\times U ,\\
\end{array}\right.
\end{equation}
has a unique solution $F$ in $L^\infty(U_T)$ satisfying
\begin{align}\label{eq_2_solution}
F(t,x,v) &= g(t-\tau_{-}(x,v),x-\tau_{-}(x,v)v,v)H(t-\tau_{-}(x,v)) + h(x-tv,v)H(\tau_{-}(x,v)-t) \notag\\
&\quad + \int_{0}^{t}f(t-s, x-sv,v)H(\tau_{-}(x,v)-s)\,ds,
\end{align}
where $H$ is the Heaviside function satisfies $H(s)=0$ if $s<0$ and $H(s)=1$ if $s>0$. 
Moreover, $F$ satisfies
\begin{equation}\label{EST:linear F}
\|F\|_{L^\infty(U_T)}\le \|g\|_{L^\infty(\Gamma^T_-)} + \|h\|_{L^\infty(U)} + T \|f\|_{L^\infty(U_T)}. 
\end{equation}
\end{lemma}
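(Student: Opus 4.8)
The statement is classical, being the method of characteristics for the free transport operator $\p_t+v\cdot\nabla_x$ (see \cite[Prop.~4, Chap.~21]{dautrayMathematicalAnalysisNumerical2000}); the plan is to (i) derive the explicit formula \eqref{eq_2_solution} by integrating along characteristics, (ii) verify that \eqref{eq_2_solution} defines an $L^\infty$ solution of \eqref{linear equation}, (iii) establish uniqueness, and (iv) read \eqref{EST:linear F} directly off the formula.

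\emph{Characteristics and the formula.} For each fixed $v\in\R^n$ the operator $\p_t+v\cdot\nabla_x$ is the derivative along the space--time lines $s\mapsto(t-s,x-sv)$, and $\tfrac{d}{ds}F(t-s,x-sv,v)=-\bigl(\p_tF+v\cdot\nabla_xF\bigr)(t-s,x-sv,v)=-f(t-s,x-sv,v)$. Fix $(t,x,v)$ with $t\in(0,T)$, $x\in\Omega$, and $v\ne0$ (the case $v=0$ is immediate). Since $\Omega$ is convex, $\{s\ge0:x-sv\in\Omega\}$ is the interval $[0,\tau_-(x,v))$, so the backward characteristic leaves the cylinder $(0,T)\times\Omega$ precisely at the parameter $s^*:=\min\{t,\tau_-(x,v)\}$, reaching the point $(t-s^*,x-s^*v)$. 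If $\tau_-(x,v)>t$ this exit point lies on the bottom face $\{0\}\times\Omega$ (with $x-tv\in\Omega$); if $\tau_-(x,v)<t$ it lies on the lateral boundary, with $x-\tau_-(x,v)v\in\p\Omega$, time $t-\tau_-(x,v)\in(0,T)$, and --- for a.e.\ such $(x,v)$, the glancing set $\{v\cdot n=0\}$ being Lebesgue-negligible --- with $v\cdot n(x-\tau_-(x,v)v)<0$, i.e.\ in $\Gamma_-$. Integrating the ODE above from $s^*$ down to $0$ and inserting the boundary datum $F=g$ on $\Gamma^T_-$ (resp.\ the initial datum $F=h$ on $\{0\}\times U$) at the exit point yields exactly \eqref{eq_2_solution}: the factors $H(t-\tau_-(x,v))$ and $H(\tau_-(x,v)-t)$ select the lateral versus bottom exit, and $H(\tau_-(x,v)-s)$ truncates the Duhamel integral at $s=\tau_-(x,v)$.

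\emph{Existence, uniqueness, and the estimate.} Conversely, define $F$ by the right-hand side of \eqref{eq_2_solution}. Since $H(t-\tau_-(x,v))+H(\tau_-(x,v)-t)=1$ for a.e.\ $(t,x,v)$, exactly one of the first two terms is active, and the three terms are bounded respectively by $\|g\|_{L^\infty(\Gamma^T_-)}$, $\|h\|_{L^\infty(U)}$ and $t\,\|f\|_{L^\infty(U_T)}\le T\,\|f\|_{L^\infty(U_T)}$; adding them gives $F\in L^\infty(U_T)$ together with \eqref{EST:linear F}. (The composition of $g$ with $(t,x,v)\mapsto(t-\tau_-(x,v),x-\tau_-(x,v)v,v)$ is well defined on $L^\infty$ classes because this map carries $d\xi\,dt$-null sets to Lebesgue-null sets, the relevant Jacobian factor being $|v\cdot n|$.) To see $F$ solves \eqref{linear equation}, test against $\varphi\in C_c^\infty(U_T)$ and use Fubini along characteristics: $\tau_\pm$ are Lipschitz off the glancing set since $\p\Omega$ is smooth and $\Omega$ convex, so \eqref{eq_2_solution} is, distributionally, constant along each characteristic up to the explicit Duhamel term, which gives $\p_tF+v\cdot\nabla_xF=f$, while the traces on $\Gamma^T_-$ and $\{0\}\times U$ are $g$ and $h$ by construction. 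Uniqueness is the standard transport-equation uniqueness: the difference of two $L^\infty$ solutions solves \eqref{linear equation} with $f=g=h=0$, hence is constant along characteristics and vanishes at each characteristic's exit point, so it equals $0$ a.e.

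\emph{Main subtlety.} The one point requiring care is that $f,g,h$ are only $L^\infty$, so ``integration along characteristics'' has to be understood in the weak sense for transport equations, not pointwise. This is classical (\cite[Chap.~21]{dautrayMathematicalAnalysisNumerical2000}) and relies on the geometry of $\Omega$ --- convex with smooth boundary --- which makes the exit-time functions $\tau_\pm$ well defined, Lipschitz away from a negligible glancing set, and compatible with the trace measure $d\xi$. With this granted, the lemma reduces to the elementary verification sketched above.
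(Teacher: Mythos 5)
Your proof is correct and is the standard method-of-characteristics argument; the paper itself does not give a proof but simply cites \cite[Prop.~4, Chap.~21]{dautrayMathematicalAnalysisNumerical2000} (and \cite{LaiUhlmannZhou22} for general geometries), and your derivation—back-tracing each point $(t,x,v)$ along $s\mapsto(t-s,x-sv)$ to its exit at $s^*=\min\{t,\tau_-(x,v)\}$, inserting $g$ or $h$ at the exit point, and appending the truncated Duhamel integral—is exactly the content of that cited result. Your care with the glancing set, the $L^\infty$ trace compatibility via the $|v\cdot n|$ Jacobian, and the uniqueness-by-characteristics step are all appropriate and fill in the details the paper leaves to the reference.
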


With the above result, we then show that the initial boundary value problem for \eqref{eq_1_IBVP} is well-posed for small data by applying the contraction mapping principle. 

\begin{thm}[Well-posedness of the Boltzmann equation] 
\label{thm_wellposed}
Let $\Omega\subseteq\R^n$, $n\ge 2$, be an open bounded and convex domain with smooth boundary $\p \Omega$. Suppose $K\in \mathcal{M}$. Then there exist constants $\kappa>0$ and $C>0$ such that when $(g,h)\in 
\mathcal{X}_\kappa$, the problem \eqref{eq_1_IBVP} has a unique solution $F\in L^\infty(U_T)$ satisfying
\[
\|F\|_{L^\infty(U_T)}\le C\LC\|g\|_{L^\infty(\Gamma^T_-)} + \|h\|_{L^\infty(U)}\RC,
\]
where the constant $C$ depends on $\kappa$, $M$, and $T$.
\end{thm}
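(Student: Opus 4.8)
The plan is to solve \eqref{eq_1_IBVP} by a contraction mapping argument built on the explicit representation formula of Lemma~\ref{lemma_solution}. Given $(g,h)\in\mathcal{X}_\kappa$, define the map $\mathcal{T}$ that sends $\widetilde F\in L^\infty(U_T)$ to the unique solution $F$ of the \emph{linear} transport problem \eqref{linear equation} with source $f=Q(\widetilde F,\widetilde F)$, incoming data $g$, and initial data $h$; such $F$ exists and obeys \eqref{EST:linear F} by Lemma~\ref{lemma_solution}. A fixed point of $\mathcal{T}$ is exactly a solution of \eqref{eq_1_IBVP}, so the goal becomes showing that $\mathcal{T}$ is a contraction on a small closed ball $\overline B_R:=\{\widetilde F\in L^\infty(U_T):\|\widetilde F\|_{L^\infty(U_T)}\le R\}$ for suitable $R$ and $\kappa$.

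The key ingredients are bilinear bounds on $Q$. From \eqref{DEF:Q}, since all factors $F_i$ are evaluated at the same $(t,x)$, the triangle inequality and the admissibility bound \eqref{assumption} give the pointwise estimate
\[
|Q(F_1,F_2)(t,x,v)|\le 2\|F_1\|_{L^\infty(U_T)}\|F_2\|_{L^\infty(U_T)}\int_{\R^n}\int_{\S^{n-1}}|K(t,x,v,u,\omega)|\,d\omega du\le 2M\|F_1\|_{L^\infty(U_T)}\|F_2\|_{L^\infty(U_T)},
\]
and hence, writing $Q(F,F)-Q(G,G)=Q(F-G,F)+Q(G,F-G)$,
\[
\|Q(F,F)-Q(G,G)\|_{L^\infty(U_T)}\le 2M\big(\|F\|_{L^\infty(U_T)}+\|G\|_{L^\infty(U_T)}\big)\|F-G\|_{L^\infty(U_T)}.
\]
Combining these with \eqref{EST:linear F} yields $\|\mathcal{T}(\widetilde F)\|_{L^\infty(U_T)}\le\kappa+2MT\|\widetilde F\|_{L^\infty(U_T)}^2$ and $\|\mathcal{T}(\widetilde F)-\mathcal{T}(\widetilde G)\|_{L^\infty(U_T)}\le 2MT\big(\|\widetilde F\|_{L^\infty(U_T)}+\|\widetilde G\|_{L^\infty(U_T)}\big)\|\widetilde F-\widetilde G\|_{L^\infty(U_T)}$. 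Choosing $R=2\kappa$ and then requiring $\kappa\le (16MT)^{-1}$, both $\kappa+2MTR^2\le R$ and $4MTR\le 1/2$ hold, so $\mathcal{T}$ maps $\overline B_R$ into itself and is a $1/2$-contraction there. The Banach fixed point theorem then produces a unique $F\in\overline B_R$ with $\mathcal{T}(F)=F$, which solves \eqref{eq_1_IBVP}.

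To obtain the stated quantitative bound, feed the fixed point back into the estimate: $\|F\|_{L^\infty(U_T)}\le\|g\|_{L^\infty(\Gamma^T_-)}+\|h\|_{L^\infty(U)}+2MT\|F\|_{L^\infty(U_T)}^2\le\big(\|g\|_{L^\infty(\Gamma^T_-)}+\|h\|_{L^\infty(U)}\big)+2MTR\|F\|_{L^\infty(U_T)}$, and since $2MTR\le 1/2$ the last term is absorbed, giving $\|F\|_{L^\infty(U_T)}\le 2\big(\|g\|_{L^\infty(\Gamma^T_-)}+\|h\|_{L^\infty(U)}\big)$, so $C=2$ works. Finally, for uniqueness in all of $L^\infty(U_T)$ (not merely in $\overline B_R$), given two solutions $F_1,F_2$ set $\phi(t):=\|F_1-F_2\|_{L^\infty((0,t)\times U)}$; restricting the representation formula \eqref{eq_2_solution} to times $\le t$ and using the bilinear bound yields $\phi(t)\le 2M\big(\|F_1\|_{L^\infty(U_T)}+\|F_2\|_{L^\infty(U_T)}\big)\int_0^t\phi(\sigma)\,d\sigma$, and iterating this (a Gr\"onwall-type argument, using that $\phi$ is nondecreasing with $\phi(0)=0$) forces $\phi\equiv 0$. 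The argument is essentially routine; the only points requiring care are the consistent bookkeeping of the constants so that the smallness threshold $\kappa$ and the radius $R$ match, and observing that the $L^\infty_{t,x,v}L^1_{u,\omega}$ control on $K$ in \eqref{assumption} is precisely what is needed to bound the gain term $F_1(t,x,u')F_2(t,x,v')$ uniformly without any change of variables in $(u,\omega)$.
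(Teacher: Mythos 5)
Your argument is correct and follows the same essential strategy as the paper: both are Banach fixed-point arguments built on Lemma~\ref{lemma_solution} and the bilinear estimate $\|Q(F_1,F_2)\|_{L^\infty(U_T)}\le 2M\|F_1\|_{L^\infty(U_T)}\|F_2\|_{L^\infty(U_T)}$. The organizational difference is that the paper first solves the homogeneous linear problem to produce $\widetilde F$, then runs the contraction on $G:=F-\widetilde F$ over a set of functions with vanishing initial and incoming traces, whereas you run the contraction directly on $F$ over the closed ball $\overline B_R\subset L^\infty(U_T)$. The two are interchangeable here; the paper's splitting makes the dependence of the bound on $\|\widetilde F\|$ (and hence on $\|g\|+\|h\|$) appear very transparently, while your version saves a step and feeds the data norm in directly. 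One genuine value-add in your write-up is the closing Gr\"onwall argument: the contraction principle only gives uniqueness in $\overline B_R$ (respectively in $\widetilde F+\mathfrak X$ in the paper), so to justify the theorem's claim of uniqueness among \emph{all} $F\in L^\infty(U_T)$ an additional argument of exactly the kind you sketch is needed, and the paper does not spell it out. The Gr\"onwall step is valid since any $L^\infty$ solution of \eqref{eq_1_IBVP} makes $Q(F,F)\in L^\infty(U_T)$ by \eqref{assumption}, hence satisfies the representation \eqref{eq_2_solution}, and the resulting integral inequality for $\phi(t)=\|F_1-F_2\|_{L^\infty((0,t)\times U)}$ has a finite Gr\"onwall constant without any smallness on $F_1,F_2$.
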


\begin{proof}
Given $(g,h)\in \mathcal{X}_\kappa$, from Lemma \ref{lemma_solution}, there exists a solution $\widetilde F\in L^\infty(U_T)$ to the following homogeneous equation:
\begin{align*}
\left\{\begin{array}{rlll}
\p_t \widetilde F + v\cdot \nabla_x \widetilde F &=& 0& \hbox{ in } U_T,\\
\widetilde F &=& g&  \hbox{ on } \Gamma^T_- ,\\
\widetilde F &=& h& \hbox{ on }\{0\}\times U ,
\end{array}\right.
\end{align*}
satisfying 
\begin{equation}\label{eq_2_lemma_est}
\|\widetilde F\|_{L^\infty(U_T)} \le \|g\|_{L^\infty(\Gamma^T_-)} + \|h\|_{L^\infty(U)} \le \kappa.
\end{equation}

To show the existence of solution $F$ of \eqref{eq_1_IBVP}, we observe that if we set $G := F-\widetilde F$, then $G$ will satisfy
\begin{align*}
\left\{\begin{array}{rlll}
\p_t G+ v\cdot \nabla_x G &=& Q(\widetilde F+ G, \widetilde F+ G)=: Q_0(G)& \hbox{ in } U_T,\\
G &=& 0&  \hbox{ on } \Gamma^T_- ,\\
G &=& 0& \hbox{ on }\{0\}\times U.
\end{array}\right.
\end{align*}
Equivalently, $G$ solves $G=(\mathcal{L}^{-1}\circ Q_0)(G)$,
where $\mathcal{L}^{-1}$ denotes the solution operator of \eqref{linear equation} with $g=h=0$.
To find $G$, we will apply the contraction mapping principle and show $\mathcal{L}^{-1}\circ Q_0$ is a contraction map on a subset $\mathfrak{X}$ of $L^\infty(U_T)$, which is defined as follows:
\begin{equation}
\mathfrak{X} := \{\varphi\in L^\infty(U_T):\, \varphi|_{\Gamma^T_-} = 0, \quad \varphi|_{t=0} = 0,\quad \hbox{and } \|\varphi\|_{L^\infty(U_T)} \le c \}
\end{equation}
with some $c>0$ to be determined later. 

For each $\varphi\in \mathfrak{X}$, by using the fact that $K\in \mathcal{M}$ and \eqref{eq_2_lemma_est}, Lemma~\ref{lemma_solution} yields that 
\begin{align*}
    \|(\mathcal{L}^{-1}\circ Q_0)(\varphi)\|_{L^\infty(U_T)} 
    \leq 
    T\|Q_0(\varphi)\|_{L^\infty(U_T)} 
    \leq 2M T \LC \|\widetilde F\|_{L^\infty(U_T)} +\|\varphi\|_{L^\infty(U_T)}\RC^2 
    \leq 2MT\LC\kappa + c \RC^2.
\end{align*}
Moreover, for $\varphi_1,\,\varphi_2\in \mathfrak{X}$, we can derive that
\begin{align*}
    \|(\mathcal{L}^{-1}\circ Q_0)(\varphi_1)-(\mathcal{L}^{-1}\circ Q_0)(\varphi_1)\|_{L^\infty(U_T)} 
    &\leq T\|Q_0(\varphi_1)- Q_0(\varphi_2)\|_{L^\infty(U_T)} 
    \\
    &\leq 4MT (\kappa+c)\|\varphi_1-\varphi_2\|_{L^\infty(U_T)}.
\end{align*}
We take $\kappa$ and $c$ sufficiently small so that $0<\kappa<c<1$ and 
$$
2MT\LC\kappa + c\RC^2 \leq c \quad\hbox{ and }\quad 4MT (\kappa+c)<1.
$$
This implies that $\mathcal{L}^{-1}\circ Q_0$ is a contraction map. By the contraction mapping principle, there exists a unique fixed point $G\in\mathfrak{X}$ so that $G$ satisfies $G =(\mathcal{L}^{-1}\circ Q_0)(G) $ and
\begin{align*}
\|G\|_{L^\infty(U_T)} 
&= \|(\mathcal{L}^{-1}\circ Q_0)(G)\|_{L^\infty(U_T)}  \\
&\le 2MT \LC\|\widetilde F\|_{L^\infty(U_T)}+ \|G\|_{L^\infty(U_T)}\RC^2 \\
&\le 2MT(\kappa+c)\LC\|\widetilde F\|_{L^\infty(U_T)} + \|G\|_{L^\infty(U_T)}\RC.
\end{align*}
Due to the choice of $c$ and $\kappa$, we have $2MT(\kappa+c)<1$, which implies that the second term on the right-hand side above can be absorbed by the left. Thus,
\[
\|G\|_{L^\infty(U_T)}\le C\|\widetilde F\|_{L^\infty(U_T)}
\]
for some constant $C>0$. 
We now conclude that $F = \widetilde F+G$ is a solution of \eqref{eq_1_IBVP} and satisfies  
\[
\|F\|_{L^\infty(U_T)}  \le  \|\widetilde F\|_{L^\infty(U_T)} +\|G\|_{L^\infty(U_T)}\le C \LC\|g\|_{L^\infty(\Gamma_-^T)} + \|h\|_{L^\infty(U)}\RC 
\]
by noting that \eqref{eq_2_lemma_est}. This completes the proof.
\end{proof}

\section{Integral Identity}\label{section_integral_identity}
The section aims to derive an integral identity for the unknown coefficient. To do so, we study the expansion formula for a family of solutions depending on small parameters $\varepsilon = (\varepsilon_1, \varepsilon_2)$ by applying multivariate finite differences.

\subsection{Multivariate finite difference}
Let $f\equiv f(\varepsilon_1,\varepsilon_2)$ represent a function of two variables $\varepsilon:=(\varepsilon_1,\varepsilon_2),  \varepsilon_j\ge 0, j = 1,2$. 
We define the \textit{second-order finite difference operator} $D^2_{\varepsilon_1, \varepsilon_2}$ of the function $f$ at the point $\varepsilon=0$ for $\varepsilon_j >0, j = 1,2$ by
$$
D^2_{\varepsilon_1, \varepsilon_2} |_{\varepsilon=0}f := {1\over \varepsilon_1\varepsilon_2}\LC f(\varepsilon_1,\varepsilon_2) - f(\varepsilon_1,0)-f(0,\varepsilon_2)- f(0,0)\RC.
$$

By Theorem \ref{thm_wellposed}, for given data $g_j\in L^\infty(\Gamma^T_-)$ and $h_j\in L^\infty(U)$, $j=1,2$ with sufficiently small parameters $\varepsilon_1\ge 0$ and $\varepsilon_2\ge 0$ such that
$\sum_{j=1}^2 \varepsilon_j\LC  \|g_j\|_{L^\infty(\Gamma^T_-)}+ \|h_j\|_{L^\infty(U)}\RC\leq \kappa$, 
there exists a unique solution $F_{\varepsilon g}^{\varepsilon h}\equiv F_{\varepsilon_1 g_1+\varepsilon_2g_2}^{\varepsilon_1 h_1+\varepsilon_2h_2}\in L^\infty(U_T)$ to the initial boundary value problem: 
\begin{equation}\label{eq_3_lineariztion_F}
\left\{\begin{array}{rlll}
    \p_t F_{\varepsilon g}^{\varepsilon h}+v\cdot\nabla_x F_{\varepsilon g}^{\varepsilon h}&=& Q(F_{\varepsilon g}^{\varepsilon h},F_{\varepsilon g}^{\varepsilon h})& \hbox{ in } U_T,\\
    F_{\varepsilon g}^{\varepsilon h} &=& \varepsilon_1 g_1+\varepsilon_2g_2 &  \hbox{ on } \Gamma^T_- ,\\
    F_{\varepsilon g}^{\varepsilon h} &=& \varepsilon_1 h_1+\varepsilon_2h_2& \hbox{ on }\{0\}\times U .\\
\end{array}\right.
\end{equation}
Moreover, the solution satisfies the estimate 
\begin{equation}\label{eq_3_estimate_F}
    \|F_{\varepsilon g}^{\varepsilon h}\|_{L^\infty(U_T)} \le  C \left(\|\varepsilon_1 g_1+\varepsilon_2g_2\|_{L^\infty(\Gamma^T_-)} + \|\varepsilon_1 h_1+\varepsilon_2h_2\|_{L^\infty(U)}\right).
\end{equation}
In particular, by the definition of the second-order finite difference, we have
\begin{equation}\label{eq_3_finite_difference}
    D^2_{\varepsilon_1,\varepsilon_2}|_{\varepsilon=0}F_{\varepsilon g}^{\varepsilon h}
    = \frac{1}{\varepsilon_1\varepsilon_2} \LC F_{\varepsilon g}^{\varepsilon h} - F_{\varepsilon_1g_1}^{\varepsilon_1h_1} -F_{\varepsilon_2g_2}^{\varepsilon_2h_2}\RC 
\end{equation}
due to the fact that the solution $F_{\varepsilon g}^{\varepsilon h}$ is identically $0$ when $\varepsilon_1 = \varepsilon_2 = 0$.

In order to express $F_{\varepsilon g}^{\varepsilon h}$ in terms of $\varepsilon_j$, we define two solutions $V_j$ and $W_{(k_1,k_2)}$ as follows. First, let $V_j$, $j = 1,2$ be the unique solution to the problem:
\begin{equation}\label{eq_3_linearization_V}
    \left\{\begin{array}{rlll}
    \p_t V_j + v\cdot \nabla_x V_j &=& 0& \hbox{ in } U_T,\\
    V_j &=& g_j&  \hbox{ on } \Gamma^T_- ,\\
    V_j &=& h_j& \hbox{ on }\{0\}\times U ,
    \end{array}\right.
\end{equation}
and by Lemma \ref{lemma_solution}, it satisfies that 
\begin{equation}
    \|V_j\|_{L^\infty(U_T)}\le \|g_j\|_{L^\infty(\Gamma_-^T)}+\|h_j\|_{L^\infty(U)}.
\end{equation}

In addition, for $k_1,k_2\in\{0,1,2\}$ satisfying $k_1+k_2 = 2$, Lemma \ref{lemma_solution} shows that there exist solutions $W_{(k_1,k_2)}\in L^\infty(U_T)$ to the nonhomogeneous equation: 
\begin{equation}\label{eq_3_linearization_W}
    \left\{\begin{array}{rlll}
    \p_t W_{(k_1,k_2)} + v\cdot \nabla_x W_{(k_1,k_2)} &=& S_{(k_1,k_2)}& \hbox{ in } U_T,\\
    W_{(k_1,k_2)} &=& 0&  \hbox{ on } \Gamma^T_- ,\\
    W_{(k_1,k_2)} &=&0& \hbox{ on }\{0\}\times U ,
\end{array}\right.
\end{equation}
where the source terms $S_{(k_1,k_2)}$ are defined as follows:
\begin{align*}
    S_{(2,0)}(t,x,v) := \int_{\R^n}\int_{\mathbb{S}^{n-1}}K(t, x, v,u,\omega)[&V_{1}(t,x,u')V_{1}(t,x,v')
    - V_{1}(t,x,u)V_{1}(t,x,v)\,d\omega du,
\end{align*}
\begin{align}\label{DEF:S11}
    S_{(1,1)}(t,x,v) := \int_{\R^n}\int_{\mathbb{S}^{n-1}}K(t, x, v,u,\omega)[&V_{1}(t,x,u')V_{2}(t,x,v')+V_{1}(t,x,v')V_{2}(t,x,u') \notag\\
    &- V_{1}(t,x,u)V_{2}(t,x,v)-V_{1}(t,x,v)V_{2}(t,x,u)]\,d\omega du,
\end{align}
and  
\begin{align*}
S_{(0,2)}(t,x,v) := \int_{\R^n}\int_{\mathbb{S}^{n-1}}K(t, x, v,u,\omega)[&V_{2}(t,x,u')V_{2}(t,x,v')
- V_{2}(t,x,u)V_{2}(t,x,v)\,d\omega du.
\end{align*}

\begin{prop}\label{prop_3_1}
Suppose that there exists $\kappa>0$ such that if $g_j\in L^\infty(\Gamma^T_-)$ and $h_j\in L^\infty(U)$ and $\varepsilon_j\ge 0$, $j=1,2$, satisfy $\sum_{j=1}^2 \varepsilon_j\LC  \|g_j\|_{L^\infty(\Gamma^T_-)}+ \|h_j\|_{L^\infty(U)}\RC\leq \kappa$, then there exists a unique solution $F_{\varepsilon g}^{\varepsilon h}$ to \eqref{eq_3_lineariztion_F} satisfying the estimate \eqref{eq_3_estimate_F}. Moreover, $F_{\varepsilon g}^{\varepsilon h}$ can be expanded in $\varepsilon_1$ and $\varepsilon_2$ in the following form:
\begin{equation}\label{eq_3_expansion}
    F_{\varepsilon g}^{\varepsilon h} = \varepsilon_1V_1+\varepsilon_2V_2+ \varepsilon_1^2W_{(2,0)}+ \varepsilon_{1}\varepsilon_{2} W_{(1,1)} + \varepsilon_2^2W_{(0,2)} +\mathcal{R}_\varepsilon,
\end{equation}
where $V_j$, $j=1,2$, solve
\eqref{eq_3_linearization_V} and $W_{(k_1,k_2)}$, $k_1,k_2\in \{0,1,2\},\, k_1+k_2 = 2$ solve \eqref{eq_3_linearization_W}. 

Moreover, the remaining function $\mathcal{R}_\varepsilon$ solves the nonhomogeneous equation:
\begin{equation}\label{eq_3_linearization_R}
    \left\{\begin{array}{rlll}
    \p_t\mathcal{R}_\varepsilon +v\cdot \nabla_x\mathcal{R}_\varepsilon &=& Q(F_{\varepsilon g}^{\varepsilon h},F_{\varepsilon g}^{\varepsilon h})  -\varepsilon_1^2S_{(2,0)}- \varepsilon_{1}\varepsilon_{2} S_{(1,1)} - \varepsilon_2^2S_{(0,2)}
    & \hbox{ in } U_T,\\
    \mathcal{R}_\varepsilon &=&0 &\hbox{ on } \Gamma^T_- ,\\
    \mathcal{R}_\varepsilon &=& 0& \hbox{ on }\{0\}\times U ,\\
    \end{array}\right.
\end{equation}
and satisfies the following estimate 
\begin{equation}\label{eq_3_estimate_R}
    \|\mathcal{R}_\varepsilon\|_{L^\infty(U_T)} 
    \le  C\LC \|\varepsilon_1g_1+\varepsilon_2g_2\|_{L^\infty(\Gamma^T_-)}+ \|\varepsilon_1h_1+\varepsilon_2h_2\|_{L^\infty(U)}\RC^3,
\end{equation}
where $C$ depends on $\kappa$, $M$, and $T$.
\end{prop}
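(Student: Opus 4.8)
The plan is to verify the ansatz \eqref{eq_3_expansion} directly and then estimate the remainder $\mathcal{R}_\varepsilon$ via Lemma~\ref{lemma_solution} and the well-posedness bounds from Theorem~\ref{thm_wellposed}. First I would note that existence and uniqueness of $F_{\varepsilon g}^{\varepsilon h}$ together with \eqref{eq_3_estimate_F} are immediate from Theorem~\ref{thm_wellposed} applied to boundary/initial data $\varepsilon_1 g_1 + \varepsilon_2 g_2$ and $\varepsilon_1 h_1 + \varepsilon_2 h_2$, whose norms are controlled by $\kappa$ by hypothesis. Then I would \emph{define} $\mathcal{R}_\varepsilon$ by \eqref{eq_3_expansion}, i.e.\ set
\[
\mathcal{R}_\varepsilon := F_{\varepsilon g}^{\varepsilon h} - \varepsilon_1 V_1 - \varepsilon_2 V_2 - \varepsilon_1^2 W_{(2,0)} - \varepsilon_1\varepsilon_2 W_{(1,1)} - \varepsilon_2^2 W_{(0,2)},
\]
and check that it solves \eqref{eq_3_linearization_R}. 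The boundary and initial conditions are clear: $F_{\varepsilon g}^{\varepsilon h}$ contributes $\varepsilon_1 g_1 + \varepsilon_2 g_2$ on $\Gamma_-^T$, which is exactly cancelled by $\varepsilon_1 V_1 + \varepsilon_2 V_2$ (the $W$'s vanish there), and similarly on $\{0\}\times U$. For the PDE, applying $\p_t + v\cdot\nabla_x$ and using \eqref{eq_3_lineariztion_F}, \eqref{eq_3_linearization_V}, \eqref{eq_3_linearization_W} gives
\[
\p_t \mathcal{R}_\varepsilon + v\cdot\nabla_x \mathcal{R}_\varepsilon = Q(F_{\varepsilon g}^{\varepsilon h}, F_{\varepsilon g}^{\varepsilon h}) - \varepsilon_1^2 S_{(2,0)} - \varepsilon_1\varepsilon_2 S_{(1,1)} - \varepsilon_2^2 S_{(0,2)},
\]
which is \eqref{eq_3_linearization_R}.

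Next I would establish the cubic bound \eqref{eq_3_estimate_R}. By Lemma~\ref{lemma_solution} (estimate \eqref{EST:linear F}) applied to \eqref{eq_3_linearization_R} with zero boundary/initial data,
\[
\|\mathcal{R}_\varepsilon\|_{L^\infty(U_T)} \le T \LN Q(F_{\varepsilon g}^{\varepsilon h}, F_{\varepsilon g}^{\varepsilon h}) - \varepsilon_1^2 S_{(2,0)} - \varepsilon_1\varepsilon_2 S_{(1,1)} - \varepsilon_2^2 S_{(0,2)} \RN_{L^\infty(U_T)}.
\]
The key algebraic observation is that $Q$ is bilinear in its two arguments, so substituting $F_{\varepsilon g}^{\varepsilon h} = \varepsilon_1 V_1 + \varepsilon_2 V_2 + \varepsilon_1^2 W_{(2,0)} + \varepsilon_1\varepsilon_2 W_{(1,1)} + \varepsilon_2^2 W_{(0,2)} + \mathcal{R}_\varepsilon$ and expanding $Q(F_{\varepsilon g}^{\varepsilon h},F_{\varepsilon g}^{\varepsilon h})$ in powers of $\varepsilon$, the pure quadratic terms $\varepsilon_1^2 Q(V_1,V_1)$, $\varepsilon_1\varepsilon_2\big(Q(V_1,V_2)+Q(V_2,V_1)\big)$, $\varepsilon_2^2 Q(V_2,V_2)$ are precisely $\varepsilon_1^2 S_{(2,0)} + \varepsilon_1\varepsilon_2 S_{(1,1)} + \varepsilon_2^2 S_{(0,2)}$ by the definitions of the source terms. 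Hence the right-hand side of \eqref{eq_3_linearization_R} consists only of terms that are \emph{at least cubic} in the $\varepsilon_j$: products of two $V$- or $W$-type factors (resp.\ $\mathcal{R}_\varepsilon$) at least one of which carries total $\varepsilon$-degree $\ge 2$, plus the cross terms with $\mathcal{R}_\varepsilon$. Using $K\in\mathcal M$ to bound $\|Q(A,B)\|_{L^\infty(U_T)} \le 2M \|A\|_{L^\infty(U_T)} \|B\|_{L^\infty(U_T)}$ (as in the proof of Theorem~\ref{thm_wellposed}), together with $\|V_j\|_{L^\infty} \le \|g_j\|_{L^\infty(\Gamma_-^T)} + \|h_j\|_{L^\infty(U)}$, the analogous bounds $\|W_{(k_1,k_2)}\|_{L^\infty} \le C T \|S_{(k_1,k_2)}\|_{L^\infty} \le C$, and \eqref{eq_3_estimate_F} for $\|F_{\varepsilon g}^{\varepsilon h}\|_{L^\infty}$, one collects a bound of the form
\[
\|\mathcal{R}_\varepsilon\|_{L^\infty(U_T)} \le C\LC (\varepsilon_1 + \varepsilon_2)^3 N^3 + (\varepsilon_1+\varepsilon_2) N \|\mathcal{R}_\varepsilon\|_{L^\infty(U_T)}\RC,
\]
where $N := \|\varepsilon_1 g_1 + \varepsilon_2 g_2\|_{L^\infty(\Gamma_-^T)} + \|\varepsilon_1 h_1 + \varepsilon_2 h_2\|_{L^\infty(U)}$ (here I am using that $(\varepsilon_1+\varepsilon_2)N \lesssim \kappa$ is small). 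Absorbing the last term into the left and bounding $(\varepsilon_1+\varepsilon_2)N$ by $N$ up to the data constants gives \eqref{eq_3_estimate_R}.

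The main technical obstacle, and the step deserving the most care, is the bookkeeping in the $\varepsilon$-expansion of $Q(F_{\varepsilon g}^{\varepsilon h},F_{\varepsilon g}^{\varepsilon h})$: one must check that \emph{all} terms other than the three quadratic ones are genuinely of $\varepsilon$-degree $\ge 3$ (or involve $\mathcal{R}_\varepsilon$ with at least one extra power of $\varepsilon$), so that after the cancellation with the $S_{(k_1,k_2)}$ the residual is cubic. This is where bilinearity of $Q$ and the exact matching of the source terms $S_{(k_1,k_2)}$ in \eqref{DEF:S11} (note the symmetrization $V_1(u')V_2(v') + V_1(v')V_2(u')$, which accounts for both orderings in $Q(V_1,V_2)+Q(V_2,V_1)$) must be used precisely; a secondary subtlety is that the estimate for $\mathcal{R}_\varepsilon$ appears on both sides, so one needs the smallness of $\kappa$ to close the estimate by absorption, exactly as in the contraction argument of Theorem~\ref{thm_wellposed}. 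Everything else is a routine application of \eqref{EST:linear F} and the product bound for $Q$.
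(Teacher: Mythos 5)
Your proof is correct and leads to the same conclusion, but it takes a slightly different route from the paper's and pays a small price for it. The paper's key device is the auxiliary function $\mathcal{F}_\varepsilon := F_{\varepsilon g}^{\varepsilon h} - \varepsilon_1 V_1 - \varepsilon_2 V_2$ (subtracting only the linear part, \emph{not} the $W$'s). Since $\mathcal{F}_\varepsilon$ has zero data and its transport equation has source $Q(F_{\varepsilon g}^{\varepsilon h},F_{\varepsilon g}^{\varepsilon h})$, Lemma~\ref{lemma_solution} gives $\|\mathcal{F}_\varepsilon\|_{L^\infty(U_T)} \le 2MT\|F_{\varepsilon g}^{\varepsilon h}\|^2_{L^\infty(U_T)}$, i.e.\ $\mathcal{F}_\varepsilon$ is automatically quadratic in the data. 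Then the algebraic identity
\[
\mathfrak{S}_\varepsilon = Q(F_{\varepsilon g}^{\varepsilon h},F_{\varepsilon g}^{\varepsilon h}) - Q(\varepsilon_1 V_1+\varepsilon_2 V_2,\varepsilon_1 V_1+\varepsilon_2 V_2) = Q(\mathcal{F}_\varepsilon, F_{\varepsilon g}^{\varepsilon h}) + Q(\varepsilon_1 V_1+\varepsilon_2 V_2, \mathcal{F}_\varepsilon)
\]
gives a \emph{direct} cubic bound for $\mathfrak{S}_\varepsilon$ --- no $\mathcal{R}_\varepsilon$ appears on the right --- and one application of Lemma~\ref{lemma_solution} yields \eqref{eq_3_estimate_R} with no absorption step. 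Your approach instead expands around the full ansatz $A := \varepsilon_1 V_1 + \varepsilon_2 V_2 + \sum \varepsilon^{k}W_{(k_1,k_2)}$ and writes $Q(A+\mathcal{R}_\varepsilon,A+\mathcal{R}_\varepsilon)$; this makes the bookkeeping heavier (you must verify the $V$--$W$ and $W$--$W$ cross terms are of degree $\ge 3$ by tracing $\|W_{(k_1,k_2)}\| \lesssim T\|S_{(k_1,k_2)}\| \lesssim \|V\|^2$ --- your bound ``$\|W\|\le C$'' glosses over this dependence on the data norms) and, because $Q(A,\mathcal{R}_\varepsilon)$, $Q(\mathcal{R}_\varepsilon,A)$, $Q(\mathcal{R}_\varepsilon,\mathcal{R}_\varepsilon)$ all contain $\mathcal{R}_\varepsilon$, you are forced into a self-referential estimate that must be closed by absorption using the smallness of $\kappa$. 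That absorption does go through (an a priori bound $\|\mathcal{R}_\varepsilon\|\lesssim N$ follows from $\mathcal{R}_\varepsilon=F-A$), so your argument is valid, but the paper's choice of $\mathcal{F}_\varepsilon$ rather than $\mathcal{R}_\varepsilon$ as the intermediate quantity sidesteps both the $W$-bookkeeping and the absorption entirely.
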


\begin{proof}  
It remains to show the existence of the solution $\mathcal{R}_\varepsilon$ and \eqref{eq_3_estimate_R}. To this end, we first denote the function $\mathcal{F}_\varepsilon$ by
$$
    \mathcal{F}_\varepsilon:=F_{\varepsilon g}^{\varepsilon h}-\varepsilon_1V_1-\varepsilon_2V_2.
$$
Then $\mathcal{F}_\varepsilon\in L^\infty(U_T)$ solves $\p_t\mathcal{F}_\varepsilon+v\cdot\nabla_x \mathcal{F_\varepsilon} = Q(F_{\varepsilon g}^{\varepsilon h},F_{\varepsilon g}^{\varepsilon h})$ with zero initial and boundary data. Applying Lemma~\ref{lemma_solution} and $K\in\mathcal{M}$, we have
\begin{align}\label{EST:P}
    \|\mathcal{F}_\varepsilon\|_{L^\infty(U_T)}\leq T\|Q(F_{\varepsilon g}^{\varepsilon h},F_{\varepsilon g}^{\varepsilon h})\|_{L^\infty(U_T)}\leq 2MT \|F_{\varepsilon g}^{\varepsilon h}\|_{L^\infty(U_T)}^2.
\end{align}
Moreover, we denote the nonhomogeneous term in \eqref{eq_3_linearization_R} by
$$
    \mathfrak S_\varepsilon(t,x,v):=Q(F_{\varepsilon g}^{\varepsilon h},F_{\varepsilon g}^{\varepsilon h})  -\varepsilon_1^2S_{(2,0)}- \varepsilon_{1}\varepsilon_{2} S_{(1,1)} - \varepsilon_2^2S_{(0,2)}.
$$
With \eqref{eq_3_estimate_F}, \eqref{EST:P} and $K\in \mathcal{M}$, for $(t,x,v)\in U_T$, a direct computation gives
\begin{equation}\label{eq_3_estimate_R_1}
\begin{aligned}
    |\mathfrak S_\varepsilon(t,x,v)|
    &= \bigg|\int_{\R^n}\int_{\S^{n-1}}K\Big[ \mathcal{F}_\varepsilon(t,x,u') F_{\varepsilon g}^{\varepsilon h}(t,x,v') + (\varepsilon_1V_1+\varepsilon_2V_2)(t,x,u')\mathcal{F}_\varepsilon(t,x,v') \\
    &\quad - \mathcal{F}_\varepsilon(t,x,u) F_{\varepsilon g}^{\varepsilon h}(t,x,v) - (\varepsilon_1V_1+\varepsilon_2V_2)(t,x,u)\mathcal{F}_\varepsilon(t,x,v) \Big]\,d\omega du\bigg|\\
    &\leq 4M^2T \|F_{\varepsilon g}^{\varepsilon h}\|^2_{L^\infty(U_T)}\LC \|F_{\varepsilon g}^{\varepsilon h}\|_{L^\infty(U_T)} + \|\varepsilon_1V_1+\varepsilon_2V_2 \|_{L^\infty(U_T)}\RC\\
    &\le C\left(\|\varepsilon_1 g_1+\varepsilon_2g_2\|_{L^\infty(\Gamma^T_-)} + \|\varepsilon_1 h_1+\varepsilon_2h_2\|_{L^\infty(U)}\right)^3.
\end{aligned}
\end{equation}
Finally, applying Lemma \ref{lemma_solution} to $\mathcal{R}_\varepsilon$, we obtain \eqref{eq_3_estimate_R}.
\end{proof}

Using the expansion of $F_{\varepsilon g}^{\varepsilon h}$ in \eqref{eq_3_expansion}, we can rewrite the finite difference $D^2_{\varepsilon_1,\varepsilon_2}|_{\varepsilon = 0} F_{\varepsilon g}^{\varepsilon h}$, defined in \eqref{eq_3_finite_difference}, as follows:
\begin{equation}\label{ID:finte diff}
    D^2_{\varepsilon_1,\varepsilon_2}|_{\varepsilon = 0} F_{\varepsilon g}^{\varepsilon h} = W_{(1,1)}+D^2_{\varepsilon_1,\varepsilon_2}|_{\varepsilon = 0}\mathcal{R}_\varepsilon.  
\end{equation}
Therefore, by applying the transport operator $\p_t+v\cdot\nabla_x$ to both sides of \eqref{ID:finte diff}, since $\mathcal{R}_\varepsilon$ is a solution of \eqref{eq_3_linearization_R}, we obtain 
\begin{equation}\label{eq_3_equality_FD}
    (\p_t+v\cdot\nabla_x) D^2_{\varepsilon_1,\varepsilon_2}|_{\varepsilon = 0}F_{\varepsilon g}^{\varepsilon h}=  S_{(1,1)}+  D^2_{\varepsilon_1,\varepsilon_2}|_{\varepsilon = 0}\mathfrak S_\varepsilon.  
\end{equation}

\subsection{An integral identity}
Equipped with the definitions and discussions above, we are ready to derive the key identity. 
Let $\varphi\equiv \varphi(x,v)$ be a function in the space $C^\infty_c (\R^n \times\R^n)$ and then take $$
    \psi(t,x,v):=\varphi(x-tv,v)
$$ 
for $(t,x,v)\in\R\times\R^n\times\R^n$. It is clear that $\psi$ is the solution to the transport equation
\[
	\p_t \psi + v\cdot \nabla_x \psi= 0 \quad \hbox{ in }\R\times\R^n\times\R^{n}  
\] 
with the initial data $\psi|_{t=0}=\varphi$.
We have the following integral identity.
\begin{prop}\label{prop:ID}
Suppose all the hypotheses in Proposition \ref{prop_3_1} are satisfied.
Then the following integral identity holds:
\begin{equation}\label{eq_3_integral_identity}
    \begin{aligned}
    \int_{U_T} \psi S_{(1,1)} \,dtdxdv 
    =&\int_U  \psi\bigg[D^2_{\varepsilon_1,\varepsilon_2}|_{\varepsilon=0}\mathcal{A}^T_K(\varepsilon_1g_1+\varepsilon_2g_2,\varepsilon_1h_1+\varepsilon_2h_2)\bigg](T,x,v)\,dxdv\\
    &+ \int_{\Gamma^T_+} (v\cdot n(x)) \psi \bigg[D^2_{\varepsilon_1,\varepsilon_2}|_{\varepsilon = 0}\mathcal{A}^{out}_K(\varepsilon_1g_1+\varepsilon_2g_2,\varepsilon_1h_1+\varepsilon_2h_2)\bigg]  \,dt d\sigma_x dv\\
    & - \int_{U_T} \psi \left[ D^2_{\varepsilon_1,\varepsilon_2}|_{\varepsilon=0}\mathfrak S_\varepsilon\right] \, dtdxdv,
    \end{aligned}
\end{equation}
where $d\sigma_x$ is the surface measure of $\p\Omega$.
\end{prop}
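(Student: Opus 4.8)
The plan is to test the transport identity \eqref{eq_3_equality_FD} against the homogeneous solution $\psi$ and integrate by parts over $U_T$. Set $Y:=D^2_{\varepsilon_1,\varepsilon_2}|_{\varepsilon=0}F_{\varepsilon g}^{\varepsilon h}$. By \eqref{ID:finte diff} we have $Y=W_{(1,1)}+D^2_{\varepsilon_1,\varepsilon_2}|_{\varepsilon=0}\mathcal{R}_\varepsilon$; since the solution operator of \eqref{linear equation} with zero initial and incoming data is linear, $D^2_{\varepsilon_1,\varepsilon_2}|_{\varepsilon=0}\mathcal{R}_\varepsilon$ solves \eqref{eq_3_linearization_R} with source $D^2_{\varepsilon_1,\varepsilon_2}|_{\varepsilon=0}\mathfrak{S}_\varepsilon$ and zero initial/incoming data, while $W_{(1,1)}$ solves \eqref{eq_3_linearization_W}. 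Hence $Y\in L^\infty(U_T)$, its traces on $\Gamma^T_-$ and on $\{0\}\times U$ vanish, and, because $S_{(1,1)}$ and $D^2_{\varepsilon_1,\varepsilon_2}|_{\varepsilon=0}\mathfrak{S}_\varepsilon$ lie in $L^\infty(U_T)$ (use $K\in\mathcal{M}$, the bound \eqref{eq_3_estimate_R_1}, and $V_j\in L^\infty(U_T)$), $Y$ has transport derivative $(\p_t+v\cdot\nabla_x)Y\in L^\infty(U_T)$, equal to $S_{(1,1)}+D^2_{\varepsilon_1,\varepsilon_2}|_{\varepsilon=0}\mathfrak{S}_\varepsilon$ as recorded in \eqref{eq_3_equality_FD}.

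Next we record two properties of $\psi(t,x,v)=\varphi(x-tv,v)$: it belongs to $C^\infty(\R\times\R^n\times\R^n)$ and solves $\p_t\psi+v\cdot\nabla_x\psi=0$ with $\psi|_{t=0}=\varphi$; and, since $\varphi\in C^\infty_c(\R^n\times\R^n)$, for each $t\in[0,T]$ the section $\psi(t,\cdot,\cdot)$ is supported in a fixed compact subset of $\R^n\times\R^n$. The latter is what makes every integral in \eqref{eq_3_integral_identity} absolutely convergent, as $S_{(1,1)}$ and $\mathfrak{S}_\varepsilon$ are merely in $L^\infty(U_T)$ with no decay in $v$. Multiplying \eqref{eq_3_equality_FD} by $\psi$ and using $\p_t\psi+v\cdot\nabla_x\psi=0$, the product rule gives $\psi(\p_t+v\cdot\nabla_x)Y=(\p_t+v\cdot\nabla_x)(\psi Y)$, so $\psi Y\in L^\infty(U_T)$ is compactly supported in $(x,v)$ with transport derivative $\psi S_{(1,1)}+\psi D^2_{\varepsilon_1,\varepsilon_2}|_{\varepsilon=0}\mathfrak{S}_\varepsilon\in L^\infty(U_T)$. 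Integrating over $U_T$ and splitting the transport operator, the $\p_t$–part yields the time-slice integrals $\int_U(\psi Y)(T,x,v)\,dxdv-\int_U(\psi Y)(0,x,v)\,dxdv$, and the $v\cdot\nabla_x$–part yields $\int_0^T\int_{\R^n}\int_{\p\Omega}(v\cdot n(x))(\psi Y)\,d\sigma_x dv dt$ by the divergence theorem on $\Omega$ (equivalently, by integrating along the characteristics $s\mapsto(s,x-(t-s)v,v)$, exactly as in the derivation of \eqref{eq_2_solution}). Since $Y$ vanishes on $\{0\}\times U$ the first time-slice term drops, and since $\p\Omega\times\R^n=\Gamma_+\cup\Gamma_-$ up to the measure-zero set $\{v\cdot n(x)=0\}$ and $Y|_{\Gamma^T_-}=0$, the $\p\Omega$–integral reduces to one over $\Gamma^T_+$.

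To close, observe that $D^2_{\varepsilon_1,\varepsilon_2}|_{\varepsilon=0}$ is a fixed linear combination of evaluations in $\varepsilon$, hence commutes with restriction to $\{t=T\}$ and to $\Gamma^T_+$; and by \eqref{DEF:A}, $\mathcal{A}^T_K(\varepsilon_1g_1+\varepsilon_2g_2,\varepsilon_1h_1+\varepsilon_2h_2)=F_{\varepsilon g}^{\varepsilon h}(T,\cdot,\cdot)$ and $\mathcal{A}^{out}_K(\varepsilon_1g_1+\varepsilon_2g_2,\varepsilon_1h_1+\varepsilon_2h_2)=F_{\varepsilon g}^{\varepsilon h}|_{\Gamma^T_+}$. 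Therefore $Y(T,\cdot,\cdot)=D^2_{\varepsilon_1,\varepsilon_2}|_{\varepsilon=0}\mathcal{A}^T_K(\varepsilon_1g_1+\varepsilon_2g_2,\varepsilon_1h_1+\varepsilon_2h_2)$ and $Y|_{\Gamma^T_+}=D^2_{\varepsilon_1,\varepsilon_2}|_{\varepsilon=0}\mathcal{A}^{out}_K(\varepsilon_1g_1+\varepsilon_2g_2,\varepsilon_1h_1+\varepsilon_2h_2)$. Substituting these identifications into the integrated identity and moving the term $\int_{U_T}\psi\,D^2_{\varepsilon_1,\varepsilon_2}|_{\varepsilon=0}\mathfrak{S}_\varepsilon\,dtdxdv$ to the right-hand side gives precisely \eqref{eq_3_integral_identity}.

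The one genuinely delicate point is the integration by parts: $Y$ is only $L^\infty(U_T)$, so $\nabla_x Y$ need not be a locally integrable function and the divergence theorem on $\Omega$ must be read through the trace theory for the transport operator — a solution of $(\p_t+v\cdot\nabla_x)Y=f$ with $f\in L^\infty(U_T)$ and $L^\infty$ initial/incoming data has well-defined traces on $\Gamma^T_\pm$ and on the slices $\{t=0\},\{t=T\}$ and satisfies the associated Green's identity (see \cite[Chapter~21]{dautrayMathematicalAnalysisNumerical2000} and \cite{LaiUhlmannZhou22}). Equivalently, one avoids abstract trace theory by noting that along each characteristic $s\mapsto(s,x-(t-s)v,v)$ the map $s\mapsto\psi Y$ is absolutely continuous with derivative equal to the bounded transport derivative of $\psi Y$, so the fundamental theorem of calculus together with Fubini, applied exactly as in the proof of Lemma \ref{lemma_solution}, produces the boundary terms directly.
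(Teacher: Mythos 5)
Your argument is correct and follows the paper's own route: multiply \eqref{eq_3_equality_FD} by $\psi$, integrate by parts over $U_T$, drop the vanishing initial and $\Gamma^T_-$ traces of $D^2_{\varepsilon_1,\varepsilon_2}|_{\varepsilon=0}F_{\varepsilon g}^{\varepsilon h}$, and read off the remaining $\{t=T\}$ and $\Gamma^T_+$ terms via \eqref{DEF:A}. You also supply the justification (characteristics / transport trace theory) for the Green's identity in the $L^\infty$ setting and the compact-support observation that makes the $v$-integrals converge, which the paper takes as understood.
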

\begin{proof}
Based on the definition of $D^2_{\varepsilon_1,\varepsilon_2}|_{\varepsilon= 0}F_{\varepsilon g}^{\varepsilon h}$, it has trivial initial and boundary data, that is,
\begin{align*}\LC D^2_{\varepsilon_1,\varepsilon_2}|_{\varepsilon= 0}F_{\varepsilon g}^{\varepsilon h}\RC\Big|_{t=0} =\LC D^2_{\varepsilon_1,\varepsilon_2}|_{\varepsilon= 0}F_{\varepsilon g}^{\varepsilon h}\RC\Big|_{\Gamma^T_-}= 0.
\end{align*}
With this, multiplying \eqref{eq_3_equality_FD} by $\psi$ and integrating over the domain $U_T$ yield that
\begin{equation}\label{}
\begin{aligned}
\int_{U_T} \psi S_{(1,1)} \,dtdxdv 
&=\int_U  \psi \left[D^2_{\varepsilon_1,\varepsilon_2}|_{\varepsilon = 0}F_{\varepsilon g}^{\varepsilon h}\right](T,x,v) \,dxdv - \int_{U_T} \psi \left[D^2_{\varepsilon_1,\varepsilon_2}|_{\varepsilon = 0}\mathfrak{S}_\varepsilon \right]\,dtdxdv\\
&\quad + \int_{\Gamma^T_+} (v\cdot n(x)) \psi \left[D^2_{\varepsilon_1,\varepsilon_2}|_{\varepsilon = 0}F_{\varepsilon g}^{\varepsilon h}\right](t,x,v)\, dt d\sigma_x dv.
\end{aligned}
\end{equation}
This ends the proof by recalling the definitions of $\mathcal{A}^{out}_K$ and $\mathcal{A}^T_K$ in \eqref{DEF:A}.
\end{proof}

\section{Recovery of the collision kernel}\label{sec:reconstruction of kernel}
In this section, we will mainly focus on the stability estimate stated in Theorem~\ref{thm:stability} since the uniqueness result in Theorem~\ref{thm:unique} follows directly. Initiating from the integral identity in Proposition~\ref{prop:ID}, the central step is to control the light ray transform of $\Phi$ by suitably choosing functions $\varphi$ localizing at a given point $(x_*,v_*)$.

Let $F^{(\ell)}$ solve the problem \eqref{eq_3_lineariztion_F} with the collision kernel $K$ replaced by 
$$
    K_\ell(t,x,v,u,\omega) = \Phi_\ell(t,x,|v|)\Psi(v,u,\omega)
$$ in the collision operator $Q$ for $\ell=1,\,2$. We denote the functions $V^{(\ell)}_j$, $W^{(\ell)}$, $S^{(\ell)}_{(k_1,k_2)}$, $\mathcal{R}_\varepsilon^{(\ell)}$, and $\mathfrak S_\varepsilon^{(\ell)}$, $\ell=1,\,2$, to be  the corresponding functions $V_j$, $W$, $S_{(k_1,k_2)}$, $\mathcal{R}_\varepsilon$ and $\mathfrak S_\varepsilon$ in Section~\ref{section_integral_identity}. Moreover, for the purpose of simplifying the notations in Proposition~\ref{prop:ID diff} below, we denote the difference of the sources $S^{(\ell)}_{(1,1)}$ and also the difference of the unknown coefficients $\Phi_\ell$ by
$$
    \widetilde{S} :=S^{(1)}_{(1,1)}- S^{(2)}_{(1,1)}\quad \hbox{ and }\quad \widetilde\Phi := \Phi_1 - \Phi_2.
$$
Note that since $V^{(\ell)}_j$, $\ell=1,\,2$, are independent of the unknown kernel $K_\ell$ and have the same initial and boundary data, by the well-posedness result for the linear transport equation, we deduce that
\[
    V^{(1)}_j = V^{(2)}_j = : V_j, \quad j = 1,2.
\]

\begin{prop}\label{prop:ID diff} Suppose that all the hypotheses in Proposition~\ref{prop_3_1} and \eqref{eq_stability_assumption} hold. Suppose that $\Phi_\ell\in C^1((0,T)\times\Omega\times(0,\infty))$ for $\ell=1,\,2$ satisfies
$$
    \text{supp}(\Phi_1-\Phi_2)(\cdot,\cdot,|v|)\subset (0,T)\times\Omega.
$$ 
Then 
\begin{equation}\label{prop_4_1}
\begin{aligned}
&\quad \left|\int_\R \int_{\R^n} \int_{\R^{n}} \varphi(y,v)\widetilde{S}(t,y+tv,v)\,dydvdt \right| 
\\
&\leq \frac{C}{\varepsilon_1\varepsilon_2} \delta \left(\|\psi\|_{L^1(\{T\}\times U)} + \|\psi\|_{L^1(\Gamma^T_+;d\xi)} \right) 
\\
&\quad+ \frac{C}{\varepsilon_1\varepsilon_2}\|\psi\|_{L^1(U_T)} \LC\sum_{j=1}^2 \varepsilon_j \LC \|g_j\|_{L^\infty(\Gamma^T_-)}+ \|h_j\|_{L^\infty(U)}\RC \RC^3 ,
\end{aligned}
\end{equation}
where $C$ is a positive constant depending on $\kappa$, $M$, and $T$.
\end{prop}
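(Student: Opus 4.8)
The plan is to apply the integral identity of Proposition~\ref{prop:ID} once with the kernel $K_1$ and once with $K_2$, using the same auxiliary data $g_j,h_j,\varepsilon_j$ and the same test function $\varphi$, and then subtract the two identities. Since the linear solutions $V_j$ do not depend on the kernel, we have $V_j^{(1)}=V_j^{(2)}=V_j$, so on the left-hand side only $\widetilde S=S^{(1)}_{(1,1)}-S^{(2)}_{(1,1)}$ survives; reading off \eqref{DEF:S11} and using $K_\ell=\Phi_\ell\Psi$, one sees that $\widetilde S(t,x,v)=\widetilde\Phi(t,x,|v|)\,R(t,x,v)$, where $R$ is the velocity integral of $\Psi$ against the bilinear expression in $V_1,V_2$ and hence is bounded and kernel-independent. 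In particular $\widetilde S(\cdot,\cdot,v)$ is supported in $(0,T)\times\Omega$ for each fixed $v$ because $\widetilde\Phi(\cdot,\cdot,|v|)$ is. The subtraction thus gives an equation of the form
\[
\int_{U_T}\psi\,\widetilde S\,dtdxdv \;=\; I_T + I_{\mathrm{out}} + I_S,
\]
where $I_T$ (resp.\ $I_{\mathrm{out}}$) is the final-data (resp.\ outgoing-data) contribution carrying the factor $D^2_{\varepsilon_1,\varepsilon_2}|_{\varepsilon=0}(\mathcal A^{T}_{K_1}-\mathcal A^{T}_{K_2})(\varepsilon_1g_1+\varepsilon_2g_2,\varepsilon_1h_1+\varepsilon_2h_2)$ (resp.\ the analogous expression with $\mathcal A^{out}$ and the weight $v\cdot n(x)$), and $I_S=-\int_{U_T}\psi\,[D^2_{\varepsilon_1,\varepsilon_2}|_{\varepsilon=0}(\mathfrak S^{(1)}_\varepsilon-\mathfrak S^{(2)}_\varepsilon)]\,dtdxdv$.

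To bound $I_T$ and $I_{\mathrm{out}}$, I would expand the second-order difference directly from its definition: since $\mathcal A_{K_\ell}(0,0)=0$,
\begin{align*}
D^2_{\varepsilon_1,\varepsilon_2}|_{\varepsilon=0}\mathcal A^{T}_{K_\ell}(\varepsilon_1g_1+\varepsilon_2g_2,\varepsilon_1h_1+\varepsilon_2h_2)
&=\tfrac{1}{\varepsilon_1\varepsilon_2}\big[\mathcal A^{T}_{K_\ell}(\varepsilon_1g_1+\varepsilon_2g_2,\varepsilon_1h_1+\varepsilon_2h_2)\\
&\quad -\mathcal A^{T}_{K_\ell}(\varepsilon_1g_1,\varepsilon_1h_1)-\mathcal A^{T}_{K_\ell}(\varepsilon_2g_2,\varepsilon_2h_2)\big],
\end{align*}
and each of the three data pairs lies in $\mathcal X_\kappa$ thanks to the smallness hypothesis $\sum_j\varepsilon_j(\|g_j\|_{L^\infty(\Gamma^T_-)}+\|h_j\|_{L^\infty(U)})\le\kappa$. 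Taking the difference of the $\ell=1$ and $\ell=2$ expressions and invoking \eqref{eq_stability_assumption} on each of the three terms separately shows that the $L^\infty(U)$ norm of $D^2_{\varepsilon_1,\varepsilon_2}|_{\varepsilon=0}(\mathcal A^{T}_{K_1}-\mathcal A^{T}_{K_2})(\cdots)$ is at most $3\delta/(\varepsilon_1\varepsilon_2)$, and similarly for the $L^\infty(\Gamma^T_+)$ norm of the outgoing piece. Pairing with $\psi$, and observing that $d\xi=|n(x)\cdot v|\,d\sigma_x dv$ exactly absorbs the weight $v\cdot n(x)$ in \eqref{eq_3_integral_identity}, I obtain
\[
|I_T|+|I_{\mathrm{out}}|\le \frac{C\delta}{\varepsilon_1\varepsilon_2}\Big(\|\psi\|_{L^1(\{T\}\times U)}+\|\psi\|_{L^1(\Gamma^T_+;d\xi)}\Big).
\]

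For $I_S$, the pointwise estimate \eqref{eq_3_estimate_R_1}, which holds verbatim for both $\ell$, gives $|\mathfrak S^{(\ell)}_\varepsilon(t,x,v)|\le C\big(\sum_j\varepsilon_j(\|g_j\|_{L^\infty(\Gamma^T_-)}+\|h_j\|_{L^\infty(U)})\big)^3$; since $D^2_{\varepsilon_1,\varepsilon_2}|_{\varepsilon=0}$ is $1/(\varepsilon_1\varepsilon_2)$ times a sum of point evaluations, each obeying this bound (the data pairs there all sit in $\mathcal X_\kappa$), the triangle inequality yields the same bound for $|D^2_{\varepsilon_1,\varepsilon_2}|_{\varepsilon=0}(\mathfrak S^{(1)}_\varepsilon-\mathfrak S^{(2)}_\varepsilon)|$ up to the extra factor $1/(\varepsilon_1\varepsilon_2)$, hence $|I_S|\le \tfrac{C}{\varepsilon_1\varepsilon_2}\|\psi\|_{L^1(U_T)}\big(\sum_j\varepsilon_j(\|g_j\|_{L^\infty(\Gamma^T_-)}+\|h_j\|_{L^\infty(U)})\big)^3$. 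It remains to convert the left-hand side into the form appearing in \eqref{prop_4_1}: writing $\psi(t,x,v)=\varphi(x-tv,v)$ and, for each fixed $(t,v)$, changing variables $y=x-tv$ (unit Jacobian) turns $\int_\Omega\psi\,\widetilde S\,dx$ into $\int_{\R^n}\varphi(y,v)\widetilde S(t,y+tv,v)\,dy$, where the $y$-domain may be enlarged to all of $\R^n$ and the $t$-integral extended from $(0,T)$ to $\R$ at no cost because $\widetilde S$ vanishes outside $(0,T)\times\Omega$ in its $(t,x)$-variables. Collecting the three bounds then yields \eqref{prop_4_1}.

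The step requiring the most care is the bookkeeping around the second-order finite difference: one must verify that every intermediate data pair stays inside $\mathcal X_\kappa$, so that both the well-posedness of Theorem~\ref{thm_wellposed} and the stability hypothesis \eqref{eq_stability_assumption} can be applied to it, and one must track the $1/(\varepsilon_1\varepsilon_2)$ prefactors consistently through all three terms. One also needs the support condition on $\widetilde\Phi$ to propagate cleanly to $\widetilde S$ in order to justify both the change of variables and the enlargement of the integration domains; everything else is a routine application of Lemma~\ref{lemma_solution}, Proposition~\ref{prop_3_1}, and the estimate \eqref{eq_3_estimate_R_1}.
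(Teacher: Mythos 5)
Your proposal is correct and follows the same route as the paper: subtract the integral identity \eqref{eq_3_integral_identity} for $K_1$ and $K_2$, bound the final-data and outgoing-data terms by expanding the second-order finite difference into three evaluations of $\mathcal{A}_{K_1}-\mathcal{A}_{K_2}$ at data in $\mathcal{X}_\kappa$ (each controlled by $\delta$ via \eqref{eq_stability_assumption}), bound the remainder term via \eqref{eq_3_estimate_R_1}, and convert the left-hand side with the change of variables $y=x-tv$ using the support of $\widetilde\Phi$. You merely spell out the finite-difference bookkeeping more explicitly than the paper, which compresses it into ``use \eqref{eq_stability_assumption} and \eqref{eq_3_estimate_R_1}.''
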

\begin{proof}
Applying the integral identity \eqref{eq_3_integral_identity} for $K_\ell,\,\ell = 1,\,2$ and taking the difference, 
we obtain 
\begin{equation}\label{eq_4_intergral_identity_difference_1}
\begin{aligned}
    &\int_{U_T} \psi (t,x,v) \widetilde{S}(t,x,v)\, dtdxdv\\
    =\,&\int_U  \psi(T,x,v)\bigg[D^2_{\varepsilon_1,\varepsilon_2}|_{\varepsilon= 0}(\mathcal{A}^T_{K_1}-\mathcal{A}^T_{K_2})(\varepsilon_1g_1+\varepsilon_2g_2,\varepsilon_1h_1+\varepsilon_2h_2)\bigg](T,x,v) \,dxdv\\
    &+ \int_{\Gamma^T_+} (n(x)\cdot v) \psi \bigg[D^2_{\varepsilon_1,\varepsilon_2}|_{\varepsilon = 0}(\mathcal{A}^{out}_{K_1}-\mathcal{A}^{out}_{K_2})(\varepsilon_1g_1+\varepsilon_2g_2,\varepsilon_1h_1+\varepsilon_2h_2)\bigg]  \,dt d\sigma_x dv
    \\
    & - \int_{U_T} \psi\bigg[D^2_{\varepsilon_1,\varepsilon_2}|_{\varepsilon = 0}(\mathfrak S_\varepsilon^{(1)} - \mathfrak S_\varepsilon^{(2)}) \bigg] \,dtdxdv.
\end{aligned}
\end{equation}

On one hand, since $(\Phi_1-\Phi_2)(\cdot,\cdot,|v|)$ is compactly supported in $(0,T)\times\Omega$ for each fixed $v\in \R^n$, from the definition of $S^{(\ell)}_{(1,1)}$ in \eqref{DEF:S11}, we can extend $\widetilde{S}(\cdot,\cdot,v)$ to $\R\times\R^n$ by $0$ outside the domain $(0,T)\times\Omega$ while preserving the regularity. Hence, by performing the change of variable $y = x-tv$, the left-hand side of \eqref{eq_4_intergral_identity_difference_1} becomes
\[
    \int_\R \int_{\R^n} \int_{\R^{n}} \varphi(y,v)\widetilde{S}(t,y+tv,v) \,dydvdt
\]
by recalling that $\psi(t,x,v) = \varphi(x-tv,v)$. 

On the other hand, let us take $\varepsilon_1$ and $\varepsilon_2$ small enough and use \eqref{eq_stability_assumption} and \eqref{eq_3_estimate_R_1} such that the absolute value of the right-hand side of \eqref{eq_4_intergral_identity_difference_1} is bounded above by 
\begin{align*}
    \frac{C}{\varepsilon_1\varepsilon_2} & \delta \left(\|\psi\|_{L^1(\{T\}\times U)} + \|\psi\|_{L^1(\Gamma^T_+; d\xi)} \right)  
\\
    &+ \frac{C}{\varepsilon_1\varepsilon_2}\|\psi\|_{L^1(U_T)} 
    \LC\sum_{j=1}^2 \varepsilon_j \LC \|g_j\|_{L^\infty(\Gamma^T_-)}+ \|h_j\|_{L^\infty(U)}\RC \RC^3, 
\end{align*}
where $C$ depends on $\kappa$, $M$, and $T$. This ends the proof of the proposition.
\end{proof}

The \textit{light ray transform} $L$ of a function $q\in L^1(\R^{n+1})$ is defined by 
$$
    L q(x,v):=\int_\R q(s,x+sv)\,ds\quad \hbox{ for }(x,v)\in \R^n\times\mathbb{S}^{n-1}.
$$
Recall that $\widetilde\Phi$ is a radial function in $v$, for our purpose, for $r>0$, we define the \textit{scaled light ray transform} of $\widetilde\Phi$ by 
\[
L_r \widetilde\Phi(x,v,r): = \int_{\R}\widetilde\Phi(s,x+sv,r)ds \quad \hbox{ for } (x,v)\in \R^n\times r\mathbb{S}^{n-1}.
\]
\begin{prop}\label{prop_4_2}
Suppose that all the hypotheses of Theorem \ref{thm:stability} hold. 
For any $(y_*,v_*)\in \R^n\times r\S^{n-1}$, we have the following estimate: 
\begin{equation}\label{EST: light ray Phi}
    \LV L_r \widetilde \Phi(y_*,v_*,r)\RV
\le C\delta^{\frac{1}{n+3}},
\end{equation}
where $C$ depends on $r$, $n$, $\kappa$,  $c_0$, $M$, $\Omega$, and $T$. 
\end{prop}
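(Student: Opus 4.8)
The plan is to extract the scaled light-ray transform of $\widetilde\Phi$ from the integral identity in Proposition~\ref{prop:ID diff} by choosing the test function $\varphi$ to concentrate at $(y_*,v_*)$, and then to optimize over the mollification scale and the finite-difference parameters $\varepsilon_1,\varepsilon_2$. First, for a small parameter $\rho>0$, I would fix velocity data $g_1,g_2,h_1,h_2$ whose associated linear solutions $V_1, V_2$ are suitable approximate $\delta$-like bumps in the $(t,x)$ variables along the characteristic through $(y_*,v_*)$; concretely, one wants $V_1(t,x,v)V_2(t,x,v')$-type products appearing in $S_{(1,1)}$ (see \eqref{DEF:S11}) to localize, for $v$ near $v_*$ and after the change of variables $y=x-tv$, to a neighborhood of $(y_*,v_*)$ of scale $\rho$. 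Using \eqref{eq_2_solution}, such $V_j$ are obtained by pushing forward mollified bump initial data $h_j$ (or boundary data $g_j$) along the flow. The lower bound \eqref{CON:Psi} on $\Psi$ is what guarantees that the $\omega$- and $u$-integrations in $S_{(1,1)}$ do not cause cancellation and in fact produce a factor bounded below by a constant multiple of $\Phi_1-\Phi_2$ integrated against the localized weight, so that the left-hand side of \eqref{prop_4_1} is, up to a controlled error of size $O(\rho^{\text{power}})$ from the width of the bump, comparable to $c\,\rho^{n}\, L_r\widetilde\Phi(y_*,v_*,r)$ (the $\rho^n$ coming from the spatial localization in $n$ dimensions; the $s$-integration along the ray is exactly the light-ray transform and is \emph{not} localized).

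Next I would estimate the right-hand side of \eqref{prop_4_1}. With $\psi(t,x,v)=\varphi(x-tv,v)$ and $\varphi$ a bump of width $\rho$ in $y$ and of fixed width in $v$, the $L^1$ norms $\|\psi\|_{L^1(\{T\}\times U)}$, $\|\psi\|_{L^1(\Gamma^T_+;d\xi)}$, and $\|\psi\|_{L^1(U_T)}$ are all $O(\rho^{n})$ (the support in $v$ is fixed, the support in $y$ is $\rho^n$, and the $t$-integration contributes an $O(1)$ factor since $\Omega, T$ are bounded). The data norms $\|g_j\|_{L^\infty(\Gamma^T_-)}+\|h_j\|_{L^\infty(U)}$ are $O(1)$ in $\rho$ by construction (bumps are normalized in $L^\infty$, not $L^1$). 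Hence \eqref{prop_4_1} gives
\[
\rho^{n}\,\LV L_r\widetilde\Phi(y_*,v_*,r)\RV
\le
C\rho^{n}\rho^{\text{err}}
+
\frac{C}{\varepsilon_1\varepsilon_2}\,\rho^{n}\,\delta
+
\frac{C}{\varepsilon_1\varepsilon_2}\,\rho^{n}\,(\varepsilon_1+\varepsilon_2)^3,
\]
where the first term is the mollification error (and I must also absorb into it the error from replacing the localized integral of $\widetilde\Phi$ by its value at the ray, using $\Phi_\ell\in C^1$). Dividing by $\rho^n$, choosing $\varepsilon_1=\varepsilon_2=\varepsilon$ so that $\varepsilon^{-2}\varepsilon^3=\varepsilon$ and $\varepsilon^{-2}\delta$ balance, i.e. $\varepsilon\sim\delta^{1/3}$, gives a bound $C(\rho^{\text{err}}+\delta^{1/3})$. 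There is a constraint $\varepsilon(\|g_j\|+\|h_j\|)\le\kappa$, which is satisfied for $\delta$ small; if $\delta$ is not small the estimate is trivial after enlarging $C$. Finally, I would optimize the remaining free parameter: the error exponent in $\rho$ and the bound $\delta^{1/3}$ must be balanced by a further constraint linking $\rho$ to $\varepsilon$ or $\delta$ coming from the admissibility condition $\sum\varepsilon_j(\|g_j\|+\|h_j\|)\le\kappa$ being uniform as the bumps sharpen — tracking how the relevant $L^\infty$ norms of the bump data blow up as $\rho\to0$ is what forces a power of $\rho$ into the data side and ultimately yields the exponent $\frac{1}{n+3}$ in \eqref{EST: light ray Phi}.

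I expect the main obstacle to be the careful bookkeeping in the localization step: one must (i) choose $g_j,h_j$ so that the \emph{product} $V_1 V_2$ appearing in $S_{(1,1)}$ localizes correctly even though the collision operator mixes the pre- and post-collision velocities $u,u',v,v'$ via \eqref{incoming and outgoing velocities}, and show the off-diagonal terms ($V_1(u')V_2(v')$ versus $V_1(u)V_2(v)$) do not destroy the localization; (ii) use the positivity lower bound \eqref{CON:Psi} on $B_3(v)\times\mathbb{S}^{n-1}$ to bound the resulting $\omega,u$-integral below, which is exactly why the ball of radius $3$ appears — the localized velocities stay within that ball; and (iii) track the precise power of $\rho$ both in the main term $\rho^n$, in the mollification/$C^1$ error, and in the growth of the $L^\infty$-norms of the bump data, since the final exponent $\frac{1}{n+3}$ is dictated by how these powers trade off against $\delta^{1/3}$. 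The transport-equation representation \eqref{eq_2_solution}, the well-posedness estimate \eqref{eq_3_estimate_F}, and the remainder bound \eqref{eq_3_estimate_R_1} supply all the analytic inputs; the work is entirely in the combinatorics of the bump construction and the scaling optimization.
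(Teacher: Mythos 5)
Your overall strategy---localize via the integral identity of Proposition~\ref{prop:ID diff} and then optimize over the mollification scale and the finite-difference parameters---is the right shape, but several of the concrete choices diverge from what actually works, and the scaling bookkeeping contains an error that would change the final exponent.

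First, the paper does \emph{not} choose $V_1,V_2$ to be $(t,x)$-localized bumps. It takes the $(t,x)$-independent choices $V_1(v)=e^{-|v-v_*|^2}$ and $V_2\equiv 1$, so that $g_j,h_j$ are fixed with $\|g_j\|_{L^\infty},\|h_j\|_{L^\infty}\le 1$ uniformly. The entire $(t,x)$-localization lives in the test function $\varphi_\lambda(y,v)=\lambda^{-2n}\chi(\frac{y-y_*}{\lambda})\chi(\frac{v-v_*}{\lambda})$, which is normalized in $L^1$. Crucially, $\varphi_\lambda$ must localize in \emph{both} $y$ and $v$ with shrinking width $\lambda$: your ``fixed width in $v$'' is a genuine gap. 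Since $\widetilde S(t,x,v)=\widetilde\Phi(t,x,|v|)\int\Psi(v,u,\omega)P(v,u,\omega)\,d\omega\,du$, a test function that is not sharply concentrated at $v_*$ cannot isolate $\widetilde\Phi(\cdot,\cdot,|v_*|)$; it would instead smear over a range of speeds.

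Second, the positivity step is not automatic and the lower bound~\eqref{CON:Psi} on $\Psi$ alone does not prevent cancellation. What prevents cancellation is the explicit sign of the combination $P(v,u,\omega)=V_1(u')V_2(v')+V_1(v')V_2(u')-V_1(u)V_2(v)-V_1(v)V_2(u)$ at $v=v_*$. With the paper's choice of $V_1,V_2$ one computes $P(v_*,u,\omega)=(1-e^{|(u-v_*)\cdot\omega|^2})(e^{-|(u-v_*)\cdot\omega|^2}-e^{-|u-v_*|^2})\le 0$, and only \emph{after} this sign is established does the lower bound on $\Psi$ over $B_3(v_*)\times\S^{n-1}$ produce a uniform lower bound on $\int\Psi|P|$. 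If you replace $V_1,V_2$ by localized bumps, the off-diagonal terms involving $u'$ and $v'$ (which depend on $\omega$ through~\eqref{incoming and outgoing velocities}) will in general have both signs, and you have not shown how you would recover a definite sign.

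Third, the scaling claim that all three $L^1$ norms of $\psi$ are $O(\rho^n)$ is false for the boundary term. After the change of variable $y=x-tv$, the volume integrals $\|\psi\|_{L^1(\{T\}\times U)}$ and $\|\psi\|_{L^1(U_T)}$ do scale like $\|\varphi\|_{L^1}$, but in $\|\psi\|_{L^1(\Gamma^T_+;d\xi)}$ the variable $x$ is constrained to the $(n-1)$-dimensional surface $\p\Omega$, so the change of variable argument does not apply. The paper's estimate~\eqref{EST:phi lambda 2} crudely drops the $y$-localization there and keeps only the $v$-localization, yielding $\|\psi_\lambda\|_{L^1(\Gamma^T_+;d\xi)}\lesssim\lambda^{-n}$ (with the paper's $L^1$ normalization). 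It is precisely this $\lambda^{-n}$ factor that, together with the $\hat\varepsilon^{-2}\delta$ and $\hat\varepsilon$ and $\lambda$ contributions in~\eqref{eq_4_10}, produces the exponent $\frac{1}{n+3}$ upon joint optimization in $(\hat\varepsilon,\lambda)$; your attribution of the exponent to growing $L^\infty$ norms of the data is incorrect since the paper's data are $O(1)$. If your $O(\rho^n)$ claim were true, dividing by $\rho^n$ and sending $\rho\to 0$ would give the much stronger and presumably unattainable $\delta^{1/3}$ bound.

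In short: keep $V_1,V_2$ as fixed velocity profiles (Gaussian and constant), put \emph{all} the localization, in both $y$ and $v$, into an $L^1$-normalized $\varphi_\lambda$, verify the sign of $P(v_*,u,\omega)$ explicitly before invoking~\eqref{CON:Psi}, and redo the boundary scaling honestly to get the $\lambda^{-n}$ that drives the final exponent.
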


\begin{proof}
The proof is split into three steps.\\ 
\noindent{\bf Step 1: Solutions to the linear equations.} We first choose suitable smooth functions $\varphi$ in $C^\infty_c (\R^n; C(\R^n))$ to be applied in the integral identity \eqref{prop_4_1}. To this end, let $\chi\in C_0^\infty(\R^n)$ be a smooth function satisfying $0\leq \chi\leq 1$, $\chi(0)=1$, and $\|\chi\|_{L^1(\R^n)} = 1$ with support $\hbox{supp}(\chi) \subseteq B_1$.
For a fixed $(y_*,v_*)\in \R^n\times r \S^{n-1}$, we define
\[
\varphi_\lambda(y,v) = \frac{1}{\lambda^{2n}}\chi\LC\frac{y-y_*}{\lambda}\RC \chi\LC\frac{v-v_*}{\lambda}\RC,\quad 0<\lambda<1.
\]
Then $\varphi_\lambda \in C^\infty_c(\R^n\times\R^n)$ and $\|\varphi_\lambda\|_{L^1(\R^n\times\R^n)} = 1$. Let $\psi_\lambda(t,x,v) = \varphi_\lambda(x-tv,v)$, which satisfies
\begin{align}\label{EST:phi lambda 1}
    \|\psi_\lambda\|_{L^1(\{T\}\times U)} \leq \|\varphi_\lambda\|_{L^1(\R^n\times\R^n)}=1,
\end{align}
\begin{align}\label{EST:phi lambda 3}
    \|\psi_\lambda\|_{L^1(U_T)}\leq \|\psi_\lambda\|_{L^1((0,T)\times\R^n\times\R^n)} =T\|\varphi_\lambda\|_{L^1(\R^n\times\R^n)} = T,
\end{align}
and 
\begin{align}\label{EST:phi lambda 2}
    \|\psi_\lambda\|_{L^1(\Gamma^T_+;d\xi)}&\leq \int^{T}_0\int_{\R^n}\int_{\p\Omega} \LV v\cdot n(x) \RV \varphi_\lambda(x-tv,v)\,d\sigma_x dvdt \notag\\
    &\leq (r+1)T\lambda^{-n}|\p\Omega|\int_{\R^n} \lambda^{-n}\chi \LC\frac{v-v_*}{\lambda}\RC \,dv 
    = (r+1)T\lambda^{-n}|\p\Omega|,
\end{align}
where $|v\cdot n(x)|\leq  r+1$ due to the compact support of $\chi$ and $|v|\leq |v_*|+\lambda< r+1 $. Here $|\p\Omega|$ is the measure of the boundary $\p\Omega$.

The solutions to the linear equations are chosen as follows: 
$$
    V_1(v) = e^{-|v - v_*|^2} \hbox{ and }\quad  V_2 \equiv 1.
$$ 
Moreover, the boundary and initial data are taken as $g_j=V_j|_{\Gamma_-^T}$ and $h_j=V_j|_{t=0}$.
We then substitute such $V_j$ in the integral \eqref{DEF:S11} and, to simplify the expression, we define the function $P$ by 
\begin{align*}
    P(v,u,\omega) &:= V_{1}(u')V_{2}(v')+V_{1}(v')V_{2}(u') 
     - V_{1}(u)V_{2}(v)-V_{1}(v)V_{2}(u)\\
    &=e^{-|u'-v_*|^2} + e^{-|v'-v_*|^2} - e^{-|u-v_*|^2} - e^{-|v-v_*|^2}.
\end{align*}
Then straightforward computations give
$$
    \|P\|_{L^\infty(\R^{n}\times\R^n\times\S^{n-1})}\le 4\quad \hbox{ and }\quad \|\p_vP\|_{L^\infty(\R^{n}\times\R^n\times\S^{n-1})}\le C
$$
for some constant $C>0$. 
In particular, applying \eqref{incoming and outgoing velocities}, we obtain that when $v=v_*$,
\[
    P(v_*,u,\omega) = (1-e^{|(u-v_*)\cdot\omega|^2})(e^{-|(u-v_*)\cdot\omega|^2}-e^{-|u-v_*|^2}),
\] 
which indicates
$
    P(v_*,u,\omega)\le0
$ and 
$
 P(v_*,u,\omega) = 0 
$
only when  $\omega\perp(v_*-u)$ or $\omega = \pm \frac{v_*-u}{|v_*-u|}$.

\noindent{\bf Step 2: Upper bound of $\int \widetilde{S}dt$.} 
We consider the following estimate
\begin{equation}\label{eq_4_triangle}
\begin{aligned}
    &\quad \left|\int_\R \widetilde{S}(t,y_*+tv_*,v_*)\,dt \right| \\
    &\le  \left| \int_\R \int_{\R^n} \int_{\R^{n}} \varphi_\lambda(y,v)\widetilde{S}(t,y+tv,v)\, dy dv dt\right|\\
    &\quad + \left|\int_\R  \int_{\R^n} \int_{\R^{n}} \varphi_\lambda(y,v)\left(\widetilde{S}(t,y+tv,v) - \widetilde{S}(t,y_*+tv_*,v_*) \right)\, dy dv dt \right|=:I_1+I_2,
\end{aligned}
\end{equation}
where we used the fact that $\|\varphi_\lambda\|_{L^1(\R^n\times\R^n)}=1$. Notice that $\|g_j\|_{L^\infty(\Gamma^T_-)}\leq 1$ and $\|h_j\|_{L^\infty(U)}\leq 1$.
For the first term on the right-hand side, we then apply the above estimates \eqref{EST:phi lambda 1} - \eqref{EST:phi lambda 2} for $\psi_\lambda$ and Proposition~\ref{prop_4_1} to derive
\begin{align}\label{EST:I1}
    \begin{aligned}
        I_1\leq \frac{C}{\varepsilon_1\varepsilon_2} \delta (1+\lambda^{-n}) 
     + \frac{C}{\varepsilon_1\varepsilon_2} \LC \varepsilon_1 +\varepsilon_2\RC^3,
    \end{aligned}
\end{align} 
where $C$ depends on $r$, $\kappa$, $n$, $M$, $\Omega$, and $T$.  

To estimate $I_2$, its integral domain for $t$ variable can be shrunk back to $(0,T)$ since $(\Phi_1-\Phi_2)(\cdot,\cdot,|v|)$ is compactly supported in $(0,T)\times\Omega$. By making change of variables $y=y_*+\lambda \tilde y$ and $v=v_*+\lambda\tilde{v}$ and denoting 
$$
    z_*=y_*+tv_* \quad \hbox{ and }\quad \tilde{z} = \tilde{y}+t\tilde{v},
$$
we have 
\begin{align*}
    I_2&= \LV\int_0^T  \int_{B_1} \int_{B_1}
    \chi(\tilde{y})\chi(\tilde{v})
    \left(\widetilde{S}(t,z_*+\lambda\tilde{z},v_*+\lambda \tilde{v}) - \widetilde{S}(t,z_*,v_*) \right)\, d\tilde{y} d\tilde{v} dt \RV\\
    &=  \Big|\int_0^T  \int_{B_1} \int_{B_1}
    \chi(\tilde{y})\chi(\tilde{v})
    \Big(\int_{\R^n}\int_{\mathbb{S}^{n-1}} \Big[ \widetilde{K}(t,z_*+\lambda\tilde{z},v_*+\lambda \tilde{v},u,\omega)P(v_*+\lambda\tilde{v},u,\omega)\\
    &\hskip7cm- \widetilde{K}(t,z_* ,v_*,u,\omega)P(v_*,u,\omega)\Big] \,du d\omega \Big)\, d\tilde{y} d\tilde{v} dt \Big|\\ 
    &\leq  \int_0^T  \int_{B_1} \int_{B_1}
    \chi(\tilde{y})\chi(\tilde{v})
     \Xi_1 (t,\tilde{y},\tilde{v}) \,d\tilde{y}d\tilde{v} dt + \int_0^T \int_{B_1} \int_{B_1}
    \chi(\tilde{y})\chi(\tilde{v})
     \Xi_2 (t,\tilde{y},\tilde{v})\, d\tilde{y}d\tilde{v} dt,
\end{align*}
where we denote
$$
    \widetilde{K}:=K_1-K_2 = (\Phi_1-\Phi_2)\Psi.
$$
Moreover, the functions $\Xi_1$ and $\Xi_2$ are defined as follows: for $(t,\tilde{y},\tilde{v})\in (0,T)\times B_1\times B_1$,
\begin{align*}
    \Xi_1 (t,\tilde{y},\tilde{v})
     :=&  \int_{\R^n}\int_{\mathbb{S}^{n-1}} \LV\widetilde{K}(t,z_*+\lambda\tilde{z},v_*+\lambda \tilde{v},u,\omega)\RV \LV P(v_*+\lambda\tilde{v},u,\omega)-P(v_*,u,\omega)\RV \,dud\omega \\
    \leq &\, M \lambda \|\p_v P\|_{L^\infty (B_2\times\R^n\times\mathbb{S}^{n-1})},
\end{align*}
and
\begin{align*}
    \Xi_2 (t,\tilde{y},\tilde{v})
    :=& \int_{\R^n}\int_{\mathbb{S}^{n-1}} \LV \widetilde{K}(t,z_*+\lambda\tilde{z},v_*+\lambda \tilde{v},u,\omega)-\widetilde{K}(t,z_* ,v_*,u,\omega)\RV \LV P(v_*,u,\omega) \RV \,du d\omega
    \\
    \leq &\,C \lambda  \int_{\R^n}\int_{\mathbb{S}^{n-1}}|\nabla_v \widetilde{K} (t,z_*, v_*+ s_1\tilde{v},u,\omega)|  \,du d\omega
    \\
    &+C\lambda\int_{\R^n}\int_{\mathbb{S}^{n-1}}|\nabla_x \widetilde{K}(t,z_*+s_2\tilde z ,v_*,u,\omega)| \,dud\omega,
    \\
    \leq &\, C M \lambda  ,\quad 0<s_1,\,s_2< \lambda.
\end{align*}
Note that in the above estimates of $\Xi_1$ and $\Xi_2$, we used the fact that $|P|\leq 4$, the mean value theorem, and $\nabla_x^{\sigma_1} \nabla_v^{\sigma_2} \tilde{K}\in\mathcal{M}$ with integers $\sigma_j\geq 0$ satisfying $0\leq\sigma_1+\sigma_2\leq 1$. Combining these estimates together yields that
\begin{align}\label{EST:I2}
    I_2\leq C\lambda  , 
\end{align}
where $C$ depends on $M$ and $T$. 
Therefore, from \eqref{eq_4_triangle}, \eqref{EST:I1}, \eqref{EST:I2}, and $1<\lambda^{-n}$, by taking $\varepsilon_1=\varepsilon_2=:\hat\varepsilon$, we arrive at
\begin{equation}\label{eq_4_10}
\begin{aligned}
    \left|\int_\R \widetilde{S}(t,y_*+tv_*,v_*)dt \right| 
    &\leq 
    C \LC {1\over \varepsilon_1\varepsilon_2} \delta (1+\lambda^{-n}) 
    + \frac{1}{\varepsilon_1\varepsilon_2} \LC  \varepsilon_1 +\varepsilon_2 \RC^3   + \lambda\RC\\
    &\leq C m^{-1}\LC \hat\varepsilon^{-2} \lambda^{-n} m   \delta
    +\hat\varepsilon + \lambda\RC,
\end{aligned}
\end{equation}
where $m:=\LC{\kappa \over 8}\RC^{n+3}{1\over \Lambda}<1$ for some sufficiently large constant $\Lambda>1$. Thanks to this extra scaling $m$, it is clearer to see that $\hat\varepsilon$ is controlled by $\kappa$, see Remark~\ref{remark:small conditions} below for details.   

Now for $\delta\in (0,1)$, we then optimize the above estimate by finding the critical point of 
$$
    E_\delta(\hat\varepsilon,\lambda):=\hat\varepsilon^{-2} \lambda^{-n} m\delta
    +\hat\varepsilon + \lambda.
$$
A direct computation gives that
$$
    \p_{\hat \varepsilon}E_\delta = -2 \hat\varepsilon^{-3}\lambda^{-n}m  \delta+1=0,\quad \p_{\lambda}E_\delta = -n\hat\varepsilon^{-2}\lambda^{-n-1}m\delta+1=0,
$$
which leads to the critical points 
$$
    \hat\varepsilon = 2^{n+1\over n+3} n^{-{n\over n+3}}\LC{\kappa\over 8}\RC \LC{\delta\over \Lambda}\RC^{1\over n+3}\quad\hbox{ and }\quad 
    \lambda= 2^{-2\over n+3} n^{{3\over n+3}} \LC{\kappa\over 8}\RC \LC{\delta\over \Lambda}\RC^{1\over n+3}.
$$
Here $\lambda<1$ if $\kappa$ is small enough. 
Thus, substituting $\hat\varepsilon$ and $\lambda$ into the right-hand side of \eqref{eq_4_10} gives
\begin{equation}\label{eq_4_11}
\begin{aligned}
    \left|\int_\R \widetilde{S}(t,y_*+tv_*,v_*)dt \right| \leq C \delta^{1\over n+3},
\end{aligned}
\end{equation}
where $C$ depends on $n$, $\kappa$, $M$, $\Omega$, and $T$.

\noindent{\bf Step 3: Estimate of the light ray transform of $\widetilde\Phi$.} 
Recall that $\Psi$ is nonnegative and $P(v_*,u,\omega)\le 0$. Hence, we have $\Psi P(v_*,u,\omega)\leq 0$ in $\R^n\times\R^n\times\mathbb{S}^{n-1}$. 

For a fixed $|v_*|=r > 0$, we take $u\in B_1(v_*+2\hat{v}_*)$ with unit vector $\hat{v}_*=v_*/|v_*|$ and consider 
$$
    \omega\in \Theta:=\left\{\omega\in \mathbb{S}^{n-1}: 0< a\leq \omega \cdot {u-v_*\over |u-v_*|}\leq b <{1\over 3}\right\}.
$$
The following estimates follow directly:
\begin{enumerate}
	\item 
	$  
	1\leq |u-v_*|^2\leq 9,
	$ 
	
	\item 
	$  
	|(u-v_*)\cdot\omega|\geq |(u-v_*)|a\geq a
	$ 
	implies
	$ 
	1-e^{|(u-v_*)\cdot\omega|^2}\leq 1- e^{a^2} <0,
	$ 
	
	\item 
    $
    e^{-|(u-v_*)\cdot\omega|^2}-e^{-|u-v_*|^2}\geq e^{-9b^2}-e^{-1}>0.
	$
\end{enumerate}
They imply
$$
|P(v_*,u,\omega)|\geq (e^{a^2}-1) (e^{-9b^2}-e^{-1})\quad \hbox{for all }u\in B_1(v_*+2\hat{v}_*),\quad \omega\in \Theta,
$$
where the lower bound of $P$ is independent of $r$.
Therefore, for each $|v_*|=r$, we derive
	\begin{align}\label{EST:Psi}
		\int_{\R^n}\int_{\mathbb{S}^{n-1}} \Psi(v_*,u,\omega)|P(v_*,u,\omega)| \, d\omega du 
		& \geq \int_{B_1(v_*+2\hat{v}_*)}\int_{\Theta} \Psi(v_*,u,\omega)|P(v_*,u,\omega)| \, d\omega du \notag\\
		&\geq (e^{a^2}-1) (e^{-9b^2}-e^{-1})\int_{B_1(v_*+2\hat{v}_*)}\int_{\Theta} \Psi(v_*,u,\omega) \, d\omega du \notag\\
        &\geq c_0 |\Theta| |B_1(0)|(e^{a^2}-1) (e^{-9b^2}-e^{-1}),
	\end{align} 
by applying \eqref{CON:Psi}, that is, $\Psi(v_*,u,\omega)\geq c_0$ in $B_3(v_*)\times \mathbb{S}^{n-1}$.
Note that this estimate \eqref{EST:Psi} is uniform in $v_*\in \R^n$.
This yields that 
\begin{align*}
    \left|\int_\R \widetilde{S}(t,y_*+tv_*,v_*)dt \right| 
    &= \LV\int_\R \widetilde{\Phi}(t,y_*+tv_*,r) \, dt\RV  \LC \int_{\R^n}\int_{\mathbb{S}^{n-1}} \Psi(v_*,u,\omega)|P(v_*,u,\omega)| \, d\omega du \RC \\
    &\geq c_1 \LV L_r\widetilde\Phi(y_*,v_*,r) \RV,
\end{align*}
where the constant $c_1>0$ depends on $a$, $b$, $n$, $c_0$ and $\Theta$.
Finally, together with the upper bound of $\int \widetilde{S}\,dt$ in \eqref{eq_4_11}, we complete the proof.
\end{proof}

\begin{remark}\label{remark:small conditions}
With a priori constants $\kappa$ and $\delta$ in the forward problem, the choice of $\varepsilon_1$ and $\varepsilon_2$ above ensures that the data is in $\mathcal{X}_\kappa$ so that the well-posedness for the nonlinear equation hold. Indeed, for $0<\delta <1 < \Lambda$, it follows that
$$
    \hat\varepsilon = 2^{n+1\over n+3} n^{-{n\over n+3}}\LC{\kappa\over 8}\RC \LC{\delta\over \Lambda}\RC^{1\over n+3}< 2 \LC{\kappa\over 8}\RC ={\kappa \over 4}. 
$$
The chosen boundary and initial data for the linear equations in the above proof satisfy $\|g_j\|_{L^\infty(\Gamma^T_-)}\leq 1$ and $\|h_j\|_{L^\infty(U)}\leq 1$, which leads to
$$
    \|\varepsilon_1 g_1+\varepsilon_2g_2\|_{L^\infty(\Gamma^T_-)} + \|\varepsilon_1 h_1+\varepsilon_2 h_2\|_{L^\infty(U)} \leq  4\hat{\varepsilon}<\kappa.
$$
It follows that $(\varepsilon_1 g_1+\varepsilon_2g_2, \varepsilon_1 h_1+\varepsilon_2 h_2)\in \mathcal{X}_\kappa$ with $\varepsilon_j\geq 0$. 
\end{remark}

We denote the set $A_r$ by
$$
    A_r:=\{(\tau,\xi)\in \R\times \R^n: |\tau|\le r|\xi|\} 
$$
and define the Fourier transform of $q$ by
$$
    \hat{q}(\tau,\xi):=\int_{\R}\int_{\R^n} q(t,x)e^{-i(t,x)\cdot(\tau,\xi)}\,dx dt.
$$
In the next proposition, we show that the Fourier transform of $\widetilde\Phi(\cdot,\cdot,r)$ in $A_r$ is controlled by its scaled light ray transform.
Since $\Omega$ is bounded, we assume that $\Omega$ is contained in a ball $B_d$ with $d>0$.
\begin{prop}\label{prop:fourier transform kernel}
Under the same hypotheses of Theorem~\ref{thm:stability},  the Fourier transform of $\widetilde\Phi$ satisfies 
\begin{align*}
    \|\widehat{\widetilde\Phi}(\cdot,\cdot,r)\|_{L^\infty(A_r)}\leq C \delta^{1\over n+3},
\end{align*}
where the constant depends on $r$, $n$, $\kappa$,  $c_0$, $M$, $\Omega$, and $T$.
\end{prop}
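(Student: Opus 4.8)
The plan is to obtain the bound directly from Proposition~\ref{prop_4_2} by means of the Fourier slice (projection--slice) identity relating the scaled light ray transform of $\widetilde\Phi(\cdot,\cdot,r)$ to the spacetime Fourier transform $\widehat{\widetilde\Phi}(\cdot,\cdot,r)$. Since $\widetilde\Phi(\cdot,\cdot,r)\in C^1((0,T)\times\Omega)$ with $\text{supp}\,\widetilde\Phi(\cdot,\cdot,r)\subset (0,T)\times\Omega$, extending by zero produces a continuous, compactly supported function on $\R^{n+1}$, so $\widehat{\widetilde\Phi}(\cdot,\cdot,r)$ is a bounded continuous function on $\R\times\R^n$ and $L_r\widetilde\Phi(\cdot,v,r)$ is well defined for every $v\in r\mathbb{S}^{n-1}$.

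The key algebraic step is the identity
\[
\int_{\R^n} L_r\widetilde\Phi(x,v,r)\,e^{-ix\cdot\xi}\,dx=\widehat{\widetilde\Phi}(-v\cdot\xi,\xi,r)\qquad\text{for all }v\in r\mathbb{S}^{n-1},\ \xi\in\R^n.
\]
To see it, write $L_r\widetilde\Phi(x,v,r)=\int_\R\widetilde\Phi(s,x+sv,r)\,ds$, perform the substitution $y=x+sv$ in the $x$-integral, and apply Fubini (legitimate by continuity and compact support): the phase becomes $e^{-iy\cdot\xi}e^{is(v\cdot\xi)}$, so the $s$-integration exactly produces the time frequency $\tau=-v\cdot\xi$.

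Next I would note that $L_r\widetilde\Phi(\cdot,v,r)$ is compactly supported uniformly in $v$: if the line $s\mapsto(s,x+sv)$ meets $\text{supp}\,\widetilde\Phi(\cdot,\cdot,r)\subset(0,T)\times\Omega\subset(0,T)\times B_d$, then $x\in B_d-sv$ for some $s\in(0,T)$, hence $|x|\le d+Tr$. Therefore $L_r\widetilde\Phi(\cdot,v,r)$ is supported in $B_{d+Tr}$, and combining this with the pointwise estimate of Proposition~\ref{prop_4_2} gives
\[
\left|\int_{\R^n} L_r\widetilde\Phi(x,v,r)\,e^{-ix\cdot\xi}\,dx\right|\le |B_{d+Tr}|\,\sup_{x\in\R^n}\left|L_r\widetilde\Phi(x,v,r)\right|\le C\,|B_{d+Tr}|\,\delta^{1/(n+3)}
\]
for every $v\in r\mathbb{S}^{n-1}$ and $\xi\in\R^n$.

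Finally, the right-hand side of the slice identity sweeps out precisely $A_r$: given $(\tau,\xi)\in A_r$ with $\xi\ne 0$, the condition $|\tau|\le r|\xi|$ ensures that the sphere $r\mathbb{S}^{n-1}$ meets the affine hyperplane $\{v:\,v\cdot\xi=-\tau\}$, so we may pick $v_*\in r\mathbb{S}^{n-1}$ with $-v_*\cdot\xi=\tau$; for $\xi=0$ necessarily $\tau=0$ and any $v_*$ works. Plugging such $v_*$ into the last two displays yields $|\widehat{\widetilde\Phi}(\tau,\xi,r)|\le C\,|B_{d+Tr}|\,\delta^{1/(n+3)}$, and taking the supremum over $(\tau,\xi)\in A_r$ finishes the proof, the constant depending only on $r$, $n$, $\kappa$, $c_0$, $M$, $\Omega$, $T$ (through the constant of Proposition~\ref{prop_4_2} and $|B_{d+Tr}|$). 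I do not expect a genuine obstacle here; the only points requiring care are the justification of Fubini and of the change of variables in the slice identity, together with the observation that $L_r\widetilde\Phi(\cdot,v,r)$ is compactly supported, so that the pointwise $L^\infty$-type bound of Proposition~\ref{prop_4_2} upgrades to the claimed bound on its Fourier transform.
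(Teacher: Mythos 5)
Your proposal is correct and follows essentially the same route as the paper: the Fourier slice identity obtained via the substitution $x=y+sv$ and Fubini, the observation that $L_r\widetilde\Phi(\cdot,v,r)$ is supported in $B_{d+rT}$, and the pointwise bound of Proposition~\ref{prop_4_2}, with $v$ chosen on $r\mathbb{S}^{n-1}$ so that $-v\cdot\xi=\tau$ (the paper writes this $v$ explicitly using a unit vector $\zeta\perp\xi$, and your separate treatment of $\xi=0$ is a harmless extra detail).
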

\begin{proof}
    For a fixed $r>0$, for any point $(\tau,\xi)\in A_r$, we take the vector $v\in r\mathbb{S}^{n-1}$ to be
    $$
        v = - {\tau\over |\xi|^2}\xi + \LC r^2-{\tau^2\over |\xi|^2}\RC^{1/2}\zeta,
    $$ 
    where the vector $\zeta\in\mathbb{S}^{n-1}$ satisfies $\xi\cdot\zeta=0$. Then $\tau= - v\cdot\xi$ so that $|\tau|\leq r|\xi|$.
    Multiplying the scaled light ray transform of $\widetilde\Phi$ by $e^{-iy\cdot \xi}$ and integrating over $\R^n$ lead to
    \begin{align}\label{ID:light ray equivalence}
        \int_{\R^n} L_r \widetilde\Phi(y,v,r) e^{-iy\cdot \xi}\,dy 
        &= \int_{\R^n} \LC\int_\R \widetilde\Phi(s,y+sv,r)\,ds \RC e^{-iy\cdot \xi}\,dy \notag \\
        &= \int_\R\int_{\R^n} \widetilde\Phi(s,x,r) e^{-i(s,x) \cdot(-v\cdot\xi,\xi)}\,dxds \notag\\
        &= \widehat{\widetilde\Phi}(-v\cdot\xi,\xi,r)=\widehat{\widetilde\Phi}(\tau,\xi,r)
    \end{align}
    by applying the Fubini theorem and change of variable $x=y+sv$. Inheriting from the compact supportness of $\widetilde\Phi(\cdot,\cdot,r)$ in $(0,T)\times \Omega$, the scaled light ray transform $L_r\widetilde\Phi(\cdot, v,r)$ is supported in the ball $B_{d + T|v|}$. Here we consider the vector $v\in r\mathbb{S}^{n-1}$ and $B_{d+T|v|}= B_{d+rT}$.
    Therefore, deriving from \eqref{EST: light ray Phi} and \eqref{ID:light ray equivalence}, we obtain 
    \begin{align*}
        |\widehat{\widetilde\Phi}(\tau,\xi,r)|\leq  \int_{B_{d+rT}}| L_r \widetilde\Phi(y,v,r) |\,dy \leq C|B_{d+rT}|\delta^{1\over n+3}\quad \hbox{ for }\quad (\tau,\xi)\in A_r
        .
    \end{align*}
\end{proof}
 
For $\widehat{\widetilde\Phi}$ in the timelike region $\{(\tau,\xi)\in \R\times \R^n: |\tau|> r|\xi|\}$, we need the following result in \cite[Theorem~1]{Ve99}, see also \cite[Lemma 3.4] {bellassouedStableDeterminationOutside2017}, to control its behavior.
\begin{prop}\label{prop:extension Ve}
    Let $r_0,\, d_0>0$. Let $D\subset \R^{n+1}$ be an open, bounded and connected set such that $\{x\in D:\,d(x,\p D)>r\}$ is connected for any $r\in [0,r_0]$. Let $E\subset D$ be an open set such that $d(E,\p D)\geq d_0$. If $q$ is an analytic function with 
    $$
        \|\p^\beta q\|_{L^\infty(D)}\leq {\Pi |\beta|!\over \rho^{|\beta| }},
    $$
    where $\beta=(\beta_1,\ldots,\beta_{n+1})$ is a multiindex with nonnegative integer $\beta_k$ and $|\beta|=\sum^{n+1}_{k=1}\beta_k$.
    Then 
    \begin{align}\label{EST:analytic estimate}
    \|q\|_{L^\infty(D)}\leq (2\Pi)^{1-\tilde\mu(|E|/|D|)} \|q\|^{\tilde\mu(|E|/|D|)}_{L^\infty(E)},
    \end{align}
    where $\tilde\mu\in (0,1)$ depends on $d_0$, $D$, $n,\, r_0,\, \rho$, and $d(x,\p D)$. Here $d(\cdot,\cdot)$ is the distance function.  
\end{prop}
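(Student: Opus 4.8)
The plan is to recognize Proposition~\ref{prop:extension Ve} as an instance of the classical principle of \emph{propagation of smallness} for real-analytic functions, and to prove it by a three-ball inequality combined with a chaining (spatial iteration) argument. Since the statement is quoted from \cite{Ve99} (see also \cite[Lemma~3.4]{bellassouedStableDeterminationOutside2017}), in practice one simply cites it; the underlying scheme, which I would carry out if a self-contained argument were needed, runs as follows.

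\emph{Steps 1 and 2: holomorphic extension and a three-ball inequality.} The hypothesis $\|\partial^\beta q\|_{L^\infty(D)}\le \Pi\,|\beta|!/\rho^{|\beta|}$ is precisely the statement that at each $x_0\in D$ the Taylor series of $q$ converges on the complex polydisc of radius $\sim\rho$ about $x_0$, with sum bounded by $\sim\Pi$ there; patching these local extensions, $q$ extends to a holomorphic function on a complex neighbourhood $\widetilde D=\{z\in\mathbb{C}^{n+1}:\operatorname{dist}(z,D)<\rho_0\}$ of $D$, with $\rho_0\sim\rho$ and $\|q\|_{L^\infty(\widetilde D)}\le C\Pi$. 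From this extension and a Hadamard three-circle / plurisubharmonicity argument one obtains an absolute exponent $\alpha=\alpha(n)\in(0,1)$ (and a constant $C$ depending on $\rho$) such that, whenever $B_{3R}(x_0)\subset D$ and $3R\le\rho_0$,
$$
\|q\|_{L^\infty(B_{2R}(x_0))}\ \le\ C\,\|q\|_{L^\infty(B_{3R}(x_0))}^{\,1-\alpha}\,\|q\|_{L^\infty(B_{R}(x_0))}^{\,\alpha}.
$$

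\emph{Step 3: chaining.} Set $r:=\tfrac12\min\{r_0,d_0,\rho_0\}$ and $R:=r/6$, and write $D_r:=\{x\in D:\operatorname{dist}(x,\partial D)>r\}$, which is connected by hypothesis since $r\le r_0$. Because $\operatorname{dist}(E,\partial D)\ge d_0\ge 2r$, $E$ contains (after shrinking) a ball $B_{2R}(y_0)\subset E\cap D_r$. Given any $x_*\in D$ with $\operatorname{dist}(x_*,\partial D)>r$, join $y_0$ to $x_*$ by a polygonal path inside $D_r$; its length can be bounded by a constant $L$ depending only on $D$ and $r$, and one selects points $y_0=x_0,x_1,\dots,x_N=x_*$ along it with $|x_j-x_{j-1}|\le R$ and $N\le L/R+1$. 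Then $B_{3R}(x_j)\subset D$ and $B_R(x_j)\subset B_{2R}(x_{j-1})$, so the three-ball inequality gives the recursion
$$
\|q\|_{L^\infty(B_{2R}(x_j))}\ \le\ C'\Pi^{1-\alpha}\,\|q\|_{L^\infty(B_{2R}(x_{j-1}))}^{\,\alpha},\qquad j=1,\dots,N,
$$
which telescopes, together with $\|q\|_{L^\infty(B_{2R}(x_0))}\le\|q\|_{L^\infty(E)}$, to
$$
|q(x_*)|\ \le\ (2\Pi)^{1-\alpha^N}\,\|q\|_{L^\infty(E)}^{\,\alpha^N}.
$$
Setting $\tilde\mu:=\alpha^N$, bounding $N$ through the volume ratio $|D|/|E|$ by a Besicovitch covering argument, and handling points $x_*$ within distance $r$ of $\partial D$ by shrinking $r$ (which degrades $\tilde\mu$, consistent with its stated dependence on $\operatorname{dist}(x,\partial D)$) or by using the collar provided by $\widetilde D$, yields \eqref{EST:analytic estimate}.

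\emph{Main obstacle.} The only step that is not completely routine is the geometric bookkeeping in Step 3: turning the qualitative hypothesis ``$D_r$ is connected for each $r\le r_0$'' into a uniform bound on the length of a connecting path inside $D_r$, hence on the number $N$ of balls in the chain and on $\tilde\mu=\alpha^N$, together with the uniform treatment of points near $\partial D$ — this is what forces the stated dependence of $\tilde\mu$ on $D$, $r_0$, $d_0$, and $\operatorname{dist}(x,\partial D)$. Steps 1 and 2 are classical.
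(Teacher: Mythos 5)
The paper does not prove Proposition~\ref{prop:extension Ve}; it is imported verbatim from \cite[Theorem~1]{Ve99} (see also \cite[Lemma~3.4]{bellassouedStableDeterminationOutside2017}), and you correctly recognize this. Your supplementary sketch---holomorphic extension of $q$ to a complex collar of $D$ via the Taylor-coefficient bound, a two-constants/three-ball inequality from plurisubharmonicity of $\log|q|$, and chaining through the connected sets $D_r$ with the geometric bookkeeping near $\partial D$ and the dependence of $\tilde\mu$ on $|E|/|D|$ singled out as the nontrivial point---is a faithful outline of the standard propagation-of-smallness argument that underlies Vessella's theorem, so your approach is consistent with the paper's.
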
 
In the proof below, we will take $D=B_2$ and $E$ to be an open subset of $B_1$ in Proposition~\ref{prop:extension Ve} so that \eqref{EST:analytic estimate} holds for all $x\in B_1$, which will be sufficient for our purpose.  
\subsection{Proof of Theorem~\ref{thm:stability}}
\begin{proof}[Proof of Theorem~\ref{thm:stability}]
    We first estimate the Fourier transform of $\widetilde\Phi$ in a low-frequency region $|(\tau,\xi)|\leq \alpha$ with the help of Proposition~\ref{prop:extension Ve} for $\alpha >0$. 
    
    For fixed $r$, we consider the function $q_\alpha(\tau,\xi) := \widehat{\widetilde\Phi}(\alpha(\tau,\xi),r)$ for $(\tau,\xi)\in\R^{n+1}$. Then $q_\alpha$ is analytic since $\widetilde\Phi$ is compactly supported. We deduce that
    \begin{align*}
        |\p^\beta q_\alpha(\tau,\xi)| 
        &= \LV \p^\beta \widehat{\widetilde\Phi}(\alpha(\tau,\xi),r)\RV= \LV(-i\alpha)^{|\beta|}   \int_{\R^{n+1}} (t,x)^\beta \widetilde\Phi(t,x,r)e^{-i\alpha (t,x)\cdot(\tau,\xi)}\,dxdt\RV\\
        &\leq \alpha^{|\beta|} (T^2+d^2)^{|\beta|\over 2} \int^T_0\int_{\Omega} |\widetilde\Phi(t,x,r)|\,dxdt
        \leq Ce^{\alpha} {|\beta|!\over \rho^{ |\beta|}},
    \end{align*}
    where we took $\rho=(T^2+d^2)^{-{1\over 2}}$ and used $\alpha^{|\beta|} \leq |\beta|! e^\alpha$. The constant $C>0$ depends on $M_1$, $\Omega$, and $T$.
    By Proposition~\ref{prop:extension Ve} with $\Pi= Ce^{\alpha}$ and $\mu=\tilde\mu(|E|/|B_2|)\in (0,1)$, we have
    \begin{align*}
        \|q_\alpha\|_{L^\infty (B_1)}\leq (2\Pi)^{1-\mu} \|q_\alpha\|^\mu_{L^\infty(E)},
    \end{align*}      
    where the set $A^{int}_r$ is the interior of $A_r$ defined by $A^{int}_r:=\{(\tau,\xi)\in \R\times \R^n: |\tau|< r|\xi|\}$ and $E=A_r^{int}\cap B_1$. With this estimate at hand, it follows that for any $(\tau,\xi)\in B_\alpha$,
    \begin{align}\label{EST:phi low frequency}
        \widehat{\widetilde\Phi}(\tau,\xi,r) = q_\alpha(\alpha^{-1}(\tau,\xi))\leq (2\Pi)^{1-\mu} \|q_\alpha\|^\mu_{L^\infty(E)} 
        \le C e^{\alpha({1-\mu})}
        \delta^{\mu\over n+3}.
    \end{align}
    Here the last inequality follows from Proposition~\ref{prop:fourier transform kernel} since $\alpha(\tau,\xi)\in A_r$ if $(\tau,\xi)\in E$, and $C$ depends on $r$, $n$, $\kappa$,  $c_0$, $M$, $M_1$, $\Omega$, and $T$.

    Next, we compute the $H^{-1}$-norm of $\widetilde\Phi$. Denote $z=(\tau,\xi)$ and $\langle z\rangle=(1+|z|^2)^{1/2}$. By Plancherel theorem and \eqref{EST:phi low frequency}, 
    \begin{align}\label{EST:stability proof}
        \|\widetilde\Phi(\cdot,\cdot,r)\|^2_{H^{-1}(\R^{n+1})} 
        &= \int_{|z|\leq \alpha} \langle z\rangle^{-2} |\widehat{\widetilde\Phi}(z,r)|^2\,dz + \int_{|z|> \alpha}\langle z\rangle^{-2} |\widehat{\widetilde\Phi}(z,r)|^2\,dz \notag\\
        &\leq C e^{(2-2\mu)\alpha}\delta^{2\mu\over n+3}\LC \int_{|z|\leq \alpha}  1\,dz\RC + \alpha^{-2} \|\widetilde\Phi(\cdot,\cdot,r)\|_{L^2(\R^{n+1})}  \notag\\
        &\leq C \LC \alpha^{n+1} e^{(2-2\mu)\alpha}\delta^{2\mu\over n+3} + \alpha^{-2}\RC  \notag\\
        &\leq C \LC e^{(3-2\mu)\alpha}\delta^{2\mu\over n+3} + \alpha^{-2} \RC,
    \end{align}
    where $C$ is independent of $\delta$. 
    There exists $0<\delta < \min\{1,\Lambda\}$ such that if \eqref{eq_stability_assumption} holds, then we take
    $$
        \alpha = {\mu \over (3-2\mu)(n+3)}|\log \delta|>0
    $$
    and substitute it into \eqref{EST:stability proof}. This implies
    \begin{align*} 
        \|\widetilde\Phi(\cdot,\cdot,r)\|_{H^{-1}(\R^{n+1})} 
        &\leq C \LC \delta^{\mu\over 2(n+3)} + |\log\delta|^{-1}\RC,
    \end{align*}
    with $C$ depends on $r$, $n$, $\kappa$,  $c_0$, $M$, $M_1$, 
    $\Omega$, and $T$.   

Lastly, we combine the above estimates together with Sobolev embedding theorem, that is,  $H^{[\frac{n+1}{2}]+1}((0,T)\times \Omega)	\hookrightarrow C^{0,\gamma}((0,T)\times \Omega)$ with some $\gamma>0$ depending on $n$ and then deduce
\[
\|\widetilde\Phi(\cdot,\cdot,r)\|_{L^\infty((0,T)\times \Omega)}\le C\|\widetilde\Phi(\cdot,\cdot,r)\|_{H^{[\frac{n+1}{2}]+1}((0,T)\times \Omega)},
\]
where $C$ depends on $n$, $T$, and $\Omega$, see \cite[Theorem 6, Section 5.6]{evansPartialDifferentialEquations2022}, and Sobolev interpolation theorem 
\[
\|\widetilde\Phi(\cdot,\cdot,r)\|_{H^{[\frac{n+1}{2}]+1}((0,T)\times \Omega)} \le C\|\widetilde\Phi(\cdot,\cdot,r)\|^\theta_{H^{-1}((0,T)\times \Omega)} \|\widetilde\Phi(\cdot,\cdot,r)\|^{1-\theta}_{H^{s}((0,T)\times \Omega)},
\]
where $C$ depends on $n$ and $0<\theta <1$ depends on $s$,
see \cite[Section 12.4, Chapter 1]{lionsNonHomogeneousBoundaryValue1972}. This completes the proof of Theorem \ref{thm:stability}.
\end{proof}

\section*{Acknowledgements}
The authors would like to express their gratitude to Professor Hanming Zhou for valuable discussions. R.-Y. Lai is partially supported by the National Science Foundation through grants DMS-2006731 and DMS-2306221.

\bibliographystyle{abbrv}
\bibliography{arXiv_230904}
\end{document}